\documentclass[11pt]{article}

\usepackage[T1]{fontenc}
\usepackage{lmodern}
\usepackage{microtype}
\usepackage[a4paper,margin=1in]{geometry}
\usepackage{amsmath,amssymb,amsthm,mathtools}
\usepackage{enumitem}
\usepackage{hyperref}
\usepackage[nameinlink,capitalise]{cleveref}
\hypersetup{
  colorlinks=true,
  linkcolor=blue,
  citecolor=blue,
  urlcolor=blue,
  pdftitle={Dynamical Dimension and Shift Embeddability without the Marker Property},
  pdfauthor={Ruxi Shi}
}

\newtheorem{theorem}{Theorem}[section]

\newtheorem{lemma}[theorem]{Lemma}
\newtheorem{proposition}[theorem]{Proposition}

\theoremstyle{definition}
\newtheorem{definition}[theorem]{Definition}

\theoremstyle{remark}
\newtheorem{remark}[theorem]{Remark}

\newcommand{\Sone}{\mathbb{S}}
\newcommand{\mdim}{\operatorname{mdim}}
\newcommand{\Ddim}{\operatorname{Ddim}}
\newcommand{\ord}{\operatorname{ord}}
\newcommand{\Cov}{\operatorname{Cov}}
\newcommand{\Prob}{\operatorname{Prob}}
\newcommand{\id}{\operatorname{id}}
\newcommand{\calX}{\mathcal X}

\title{Dynamical Dimension and Shift Embeddability without the Marker Property}
\author{Ruxi Shi\thanks{This work was supported by the National Natural
Science Foundation of China under Grant Nos.~12571198 and 12231013.}}
\date{}

\begin{document}
\maketitle

\begin{abstract}
We study the aperiodic inverse-limit system constructed in our earlier work as
an example of a finite-mean-dimensional dynamical system without the marker
property.  We prove that its mean dimension and Meyerovitch's dynamical
dimension are both equal to $N$.  Despite the absence of the marker property,
the system admits an equivariant topological embedding into the cubical shift
with alphabet dimension $3N+2$.  As an auxiliary result, we prove that the
dynamical dimension of the full shift over any compact metrizable alphabet is
exactly the covering dimension of the alphabet, including when this dimension
is infinite.
\end{abstract}

\noindent\textbf{Keywords:} mean dimension, dynamical dimension, marker
property, equivariant embedding, aperiodic dynamical system.

\medskip
\noindent\textbf{2020 Mathematics Subject Classification:} Primary 37B02;
Secondary 37B10, 54F45.

\section{Introduction}\label{sec:introduction}

Throughout, a \emph{topological dynamical system} $(X,T)$ consists of a
nonempty compact metrizable space $X$ and a homeomorphism $T:X\to X$.  A
\emph{factor map} $p:(Y,S)\to(Z,R)$ is a continuous surjection satisfying
$p\circ S=R\circ p$.  A continuous map
$f:X\to[0,1]^D$ determines the equivariant orbit map
\[
I_f:X\longrightarrow([0,1]^D)^{\mathbb Z},
\qquad I_f(x)=(f(T^nx))_{n\in\mathbb Z}.
\]
The shift-embedding problem asks when $f$ can be chosen so that $I_f$ is
injective.  This problem links topological dynamics with classical dimension
theory; see, for example, \cite{Coornaert2005}.  Mean dimension, introduced by
Gromov \cite{Gromov1999} and developed systematically by Lindenstrauss and
Weiss \cite{LW2000}, provides a fundamental obstruction: an embedding into the
$D$-cubical shift forces $\mdim(X,T)\le D$.

A substantial positive theory has been developed under hypotheses that
provide orbit markers.  Lindenstrauss proved an embedding theorem for systems
with an infinite minimal factor \cite{Lindenstrauss1999}.  Gutman and Tsukamoto
obtained the sharp bound for extensions of aperiodic subshifts
\cite{GutmanTsukamoto2014}, and Gutman, Qiao and Tsukamoto established the
corresponding sharp theorem for systems with the marker property
\cite{GutmanQiaoTsukamoto2019}.  The threshold $D/2$ in these results is
optimal by the examples of Lindenstrauss and Tsukamoto
\cite{LindenstraussTsukamoto2014}.  Related embedding results involving
periodic points and local markers were proved by Gutman \cite{Gutman2017}.
These results demonstrate the strength of the marker property, but
aperiodicity alone does not imply it.

Nor do finite mean dimension and freeness alone guarantee shift embeddability.
Dranishnikov and Levin constructed a free $\mathbb Z$-action by isometries on a
compact metric space that does not embed into any finite-dimensional cubical
shift \cite{DranishnikovLevin2025}.  Motivated in part by this phenomenon,
Meyerovitch introduced a new dimension invariant for dynamical systems
\cite{Meyerovitch2026}.  This invariant is monotone under equivariant
embeddings, dominates mean dimension, and detects the Dranishnikov--Levin
obstruction; moreover, its finiteness implies almost shift embeddability.  We
denote it by $\Ddim$ to distinguish it from covering dimension and mean
dimension.

In our earlier work \cite{Shi2021}, we constructed an aperiodic system of
finite mean dimension without the marker property.  It is therefore outside
the direct scope of the embedding theorems above, but this alone does not
decide whether it is shift-embeddable.  The purpose of the present paper is to
settle this question affirmatively for that system and, at the same time, to
compute both its mean dimension and its dynamical dimension exactly.  The
full-shift computation for dynamical dimension, which is needed for the upper
bound, may also be of independent interest.

Fix the parameters $N$ and $\delta$ used in the construction of \cite{Shi2021},
with $0<\delta<1$, and let $(\calX,T)$ denote the resulting inverse-limit
system, recalled in \cref{sec:construction}.  Our main result is the following.

\begin{theorem}\label{thm:main}
The dynamical system $(\calX,T)$ has the following properties:
\begin{enumerate}[label=(\roman*)]
\item $(\calX,T)$ is aperiodic;
\item $(\calX,T)$ does not have the marker property;
\item
\[
\mdim(\calX,T)=\Ddim(\calX,T)=N;
\]
\item there is an equivariant topological embedding
\[
(\calX,T)
\hookrightarrow
\left(([0,1]^{3N+2})^{\mathbb Z},\sigma\right).
\]
\end{enumerate}
\end{theorem}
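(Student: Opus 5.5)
We will treat the four items separately. Items (i) and (ii) are already established in \cite{Shi2021}, so we will only recall the construction of $(\calX,T)$ in \cref{sec:construction} and cite those results. The substance lies in (iii) and (iv). Our strategy is to sandwich both invariants between $N$ from below and $N$ from above, and to obtain the upper bound through an explicit equivariant embedding.

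\textbf{Lower bound.} Since $\Ddim\ge\mdim$ holds in general \cite{Meyerovitch2026}, it suffices to show $\mdim(\calX,T)\ge N$. In the construction of \cite{Shi2021}, the $n$-th stage space contains products of $N$-dimensional cubes (or spheres) indexed along orbit segments of length comparable to the stage period, occupying a proportion tending to $1$ (up to the $\delta$-dependent corrections). We would use the standard widim lower bound for cubes: if $\pi_n\colon\calX\to[0,1]^{NL_n}$ is the coordinate projection of stage $n$ and it is essentially surjective onto a cube, then
\[
\operatorname{widim}_\epsilon(\calX,d_{L_n})\ge NL_n(1-o(1)).
\]
Letting $n\to\infty$ gives $\mdim\ge N$. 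If the earlier paper already computes $\mdim=N$ or $\mdim\ge N$, we will simply quote it.

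\textbf{Upper bound and embedding.} We plan to construct an equivariant embedding of $(\calX,T)$ into a full shift whose alphabet has covering dimension $N$, possibly after taking a product with an auxiliary zero- or low-dimensional factor. A natural candidate is the map $x\mapsto(\text{coordinate}_0(T^kx))_{k}$, whose alphabet is the $N$-dimensional fibre space of the inverse limit, augmented by a finite-dimensional tag that records the stage structure. Combining the auxiliary full-shift theorem ($\Ddim$ of a full shift equals the dimension of its alphabet) with monotonicity of $\Ddim$ under embeddings then yields $\Ddim(\calX,T)\le N$. If the tag adds dimension, we must arrange that it is zero-dimensional, or that it is determined by the $N$-dimensional coordinates, so the bound stays exactly $N$.

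For (iv), we take the embedding $\calX\hookrightarrow (K^{\mathbb Z},\sigma)$ above with $\dim K\le N+c$, for a small constant $c$. We then embed $K$ into $[0,1]^{3N+2}$, using a Menger--N\"obeling-type embedding of the alphabet combined with the extra coordinates needed to separate the tag. The count $3N+2=2N+1+(N+1)$ suggests the following split: an embedding of an $N$-dimensional alphabet into $[0,1]^{2N+1}$, plus $N+1$ further coordinates. These extra coordinates would carry the information that distinguishes points whose alphabet sequences agree, such as the phase within the inverse-limit tower, encoded via an $N$-dimensional auxiliary map into $[0,1]^{N+1}$. The main obstacle is injectivity of the full orbit map, particularly at the limiting points of the inverse limit where the marker structure degenerates. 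There we would use genericity (Baire category over $C(\calX,[0,1]^{3N+2})$): we will show that the set of $f$ for which $I_f$ separates each pair in a compact family of pairs is open and dense, using the dimension count $3N+2>2\cdot N+\dots$ on the relevant pair spaces.
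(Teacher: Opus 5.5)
Your plan has genuine gaps in both (iii) and (iv).

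\textbf{Part (iii).} For $\Ddim(\calX,T)\le N$ you propose an equivariant embedding $\calX\hookrightarrow K^{\mathbb Z}$ with $\dim K=N$ plus a ``tag recording the stage structure'', but you never construct it. The tag you describe, the phase within the tower $q(1)\mid q(2)\mid\cdots$, cannot exist. A continuous equivariant phase would be a factor map onto the odometer $\varprojlim\mathbb Z/q(m)\mathbb Z$, and pulling back its cylinders $\{\omega\equiv 0\bmod q(m)\}$ would give $(\calX,T)$ the marker property, contradicting (ii). Reading off only the first stage is not injective either. Two points with the same $\pi_1$-image differ at stage $m$ by a $q(m)$-periodic sequence with vanishing block sums, and these differences fill infinite-dimensional fibres. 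Note also that an embedding into $K^{\mathbb Z}$ with $\dim K=N$ would, by Menger--N\"obeling, embed $\calX$ into the $(2N+1)$-cubical shift, which the paper leaves open.

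The missing step is much softer. Since $X_m\subseteq(\Sone^N)^{\mathbb Z}$, monotonicity and the full-shift formula give $\Ddim(X_m,T_m)\le N$. Moreover, $\Ddim$ of an inverse limit is at most $\sup_m\Ddim(X_m,T_m)$: every finite open cover of $\calX$ is refined by $\pi_m^{-1}\mathcal A$ for a cover $\mathcal A$ of a single stage, and invariant measures push forward under $\pi_m$.

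For $\mdim(\calX,T)\ge N$, a projection that is merely surjective onto a cube gives no widim bound. Mean dimension can increase under factor maps: a full shift over a Cantor alphabet factors onto the full shift on $[0,1]^{\mathbb Z}$. The earlier paper only gives finite mean dimension, so there is nothing to quote. What you need is:
\begin{itemize}
\item $\mdim(X_m,T_m)\ge N$, e.g.\ from the closed $\sigma^{2r}$-invariant set $Y_r\subseteq X_m$ on which $\sigma^{2r}$ is conjugate to the full shift over $[0,1]^{2rN}$;
\item the continuous, non-equivariant section $\gamma_m$ of $\pi_m$ built from the right inverses $\eta_{j-1,j}$. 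A section yields $D_{\calX}(\pi_m^{-1}\alpha)=D_{X_m}(\alpha)$ for every finite open cover $\alpha$, hence $\mdim(\calX,T)\ge\mdim(X_m,T_m)$.
\end{itemize}

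\textbf{Part (iv).} Your split $3N+2=(2N+1)+(N+1)$ is the right one, but the mechanism is not. A Baire-category argument with a ``dimension count on pair spaces'' has nothing to count. Via $\gamma_m$, $\calX$ contains a copy of $X_m$ and hence of a Hilbert cube, so $\calX\times\calX$ is infinite-dimensional. The dynamical substitute, perturbing along orbit blocks with mean-dimension control, needs markers, which (ii) excludes. The paper combines two ideas absent from your plan.
\begin{itemize}
\item Since $\Ddim(\calX,T)=N<(2N+1)/2$, Meyerovitch's theorem gives an equivariant almost embedding $A:\calX\to([0,1]^{2N+1})^{\mathbb Z}$. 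That is, every invariant measure on the fibre product $\calX\times_A\calX$ is carried by the diagonal.
\item $\pi_1$ is a distal extension. If $\pi_1(\mathbf x)=\pi_1(\mathbf y)$ and $x^{(m)}\ne y^{(m)}$, then $z=x^{(m)}-y^{(m)}$ satisfies $\theta_{m,1}z=0$, i.e.\ $\sum_{i=0}^{q(m)-1}z_{k+i}=0$ for all $k$. So $z$ is a nonzero $q(m)$-periodic sequence. The map $(\mathbf u,\mathbf v)\mapsto u^{(m)}-v^{(m)}$ then sends the orbit closure of $(\mathbf x,\mathbf y)$ into the finite set $\{\sigma^r z:0\le r<q(m)\}$, which omits $0$.
\end{itemize}
Now suppose distinct $\mathbf x,\mathbf y$ had the same image under both $A$ and $\pi_1$. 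An invariant measure on the orbit closure of $(\mathbf x,\mathbf y)$ would live on $\calX\times_A\calX$ off the diagonal, a contradiction. Hence $\mathbf x\mapsto(A(\mathbf x),E_1(\pi_1\mathbf x))$ is injective. Here $E_1:X_1\hookrightarrow([0,1]^{N+1})^{\mathbb Z}$ applies an embedding $\Sone^N\hookrightarrow[0,1]^{N+1}$ coordinatewise. So the extra $N+1$ coordinates carry the first stage, not a tower phase.
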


\begin{remark}
The integer $N$ is selected existentially in equation~(5.1) of
\cite{Shi2021}; no canonical or minimal value is specified there.  Thus the
equality $\mdim(\calX,T)=\Ddim(\calX,T)=N$ is exact for each system produced
by the construction, rather than a universal numerical value independent of
the choices.
\end{remark}

We briefly describe the proof.  First, we show that every finite stage has
mean dimension exactly $N$.  The upper bound comes from its inclusion in the
full shift over $\Sone^N$, while a finite-block subsystem gives the lower
bound.  For the inverse limit, the upper bound follows from the standard
inverse-limit inequality, and continuous sections of the natural projections
give the lower bound.  Consequently,
\[
\mdim(\calX,T)=N.
\]

For dynamical dimension, we prove the full-shift formula
\[
\Ddim(K^{\mathbb Z},\sigma)=\dim K
\]
for every compact metrizable space $K$, together with an inverse-limit
inequality for $\Ddim$.  Applied to the stages of $\calX$, these results and
the general inequality $\mdim\le\Ddim$ yield
\[
\Ddim(\calX,T)=N.
\]
Meyerovitch's almost-embedding theorem therefore provides an equivariant
almost embedding of $(\calX,T)$ into the $(2N+1)$-cubical shift.

Finally, we prove that the first-coordinate projection of the inverse limit
$\pi_1:\calX\to X_1$ is a distal extension.  A topological embedding
$\Sone^N\hookrightarrow[0,1]^{N+1}$ induces an equivariant embedding of
$X_1$ into the $(N+1)$-cubical shift.  Recording this factor together with the
almost embedding makes the resulting map injective: distality of the extension
rules out every nontrivial collision remaining in a fiber of $\pi_1$.  This
relative-distal argument does not require the whole system $(\calX,T)$ to be
distal, and gives the claimed embedding into the $(3N+2)$-cubical shift.

The paper is organized as follows.  \Cref{sec:construction} recalls the
inverse-limit construction.  \Cref{sec:mdim-prelim,sec:finite-mdim,sec:inverse-mdim}
compute the mean dimension of the finite stages and of the inverse limit.
\Cref{sec:Ddim-inverse} recalls dynamical dimension and proves an inverse-limit
estimate.  \Cref{sec:Ddim-full-shift} proves the full-shift formula and
computes the invariant of our system.  \Cref{sec:relative-distal,sec:upgrade}
show that the first-coordinate projection is a distal extension and use it to
upgrade the almost embedding to a topological embedding.
\Cref{sec:open-questions} records several open questions suggested by the
construction.

\section{The construction}\label{sec:construction}

Let
\[
\Sone=\mathbb R/2\mathbb Z
\]
with the metric
\[
\rho(s,t)=\min_{n\in\mathbb Z}|s-t-2n|.
\]
Thus $\Sone$ is a circle of circumference $2$ and diameter $1$.  On $\Sone^N$ use the maximum metric
\[
\rho_N(x,y)=\max_{1\le j\le N}\rho(x_j,y_j).
\]
For positive integers $N,r$ and $\delta>0$, consider the subshift
\[
X(N,r,\delta)
=
\left\{x=(x_k)_{k\in\mathbb Z}\in(\Sone^N)^{\mathbb Z}:
\rho_N(x_k,x_{k+r})\ge \delta\text{ for every }k\in\mathbb Z
\right\},
\]
with the left shift $\sigma(x)_k=x_{k+1}$.

In Section~5 of our earlier paper \cite{Shi2021}, we constructed this system to
obtain a finite-mean-dimensional system without the marker property.
The failure of the marker property was deduced there from a periodic-coindex
obstruction.  In that construction, we chose an integer $N$ and a number
$\delta>0$ so that the required periodic-coindex inequality holds.  We may
additionally assume that
\[
0<\delta<1.
\]
Indeed, replacing $\delta$ by a smaller positive number enlarges
$X(N,1,\delta)$, so the coindex inequality is preserved by monotonicity under
equivariant maps.

Set
\[
q(m)=\prod_{j=1}^{m}j=m!,
\qquad
X_m=X(N,q(m),\delta),
\qquad
T_m=\sigma.
\]
For $m\ge2$, the bonding map $\theta_{m,m-1}:X_m\to X_{m-1}$ is
\begin{equation}\label{eq:bonding}
(\theta_{m,m-1}x)_k
=
\sum_{i=0}^{m-1}x_{k+i q(m-1)},
\end{equation}
where the sum is taken in the compact abelian group $\Sone^N$.  Write
\[
\theta_{m,n}=\theta_{n+1,n}\circ\cdots\circ\theta_{m,m-1}
\qquad(m>n).
\]
We also set $\theta_{m,m}=\id_{X_m}$.
To see directly that \eqref{eq:bonding} maps $X_m$ into $X_{m-1}$, note that
translation invariance of $\rho_N$ and the identity $q(m)=m q(m-1)$ give
\[
\rho_N\bigl((\theta_{m,m-1}x)_k,
(\theta_{m,m-1}x)_{k+q(m-1)}\bigr)
=
\rho_N(x_k,x_{k+q(m)})
\ge\delta.
\]
The formula in \eqref{eq:bonding} also defines a continuous group
homomorphism between the ambient full shifts.  We use the same notation
$\theta_{m,m-1}$, and hence $\theta_{m,n}$, for these extensions; on $X_m$
they agree with the bonding maps above.
In \cite[Lemmas~5.2--5.3]{Shi2021}, we constructed continuous right inverses
\[
\eta_{m-1,m}:X_{m-1}\longrightarrow X_m,
\qquad
\theta_{m,m-1}\circ\eta_{m-1,m}=\id_{X_{m-1}},
\]
and hence all bonding maps are surjective.  The inverse limit is
\[
(\calX,T)=\varprojlim (X_m,T_m),
\]
with natural projections $\pi_m:\calX\to X_m$.  As proved in \cite{Shi2021},
this system does not have the marker property.  It is also aperiodic.  Indeed,
if $T^p\mathbf x=\mathbf x$ for some $p\ge1$, then for any $m$ with
$p\mid q(m)$ the coordinate $x^{(m)}$ is $q(m)$-periodic, contradicting the
defining inequality
$\rho_N(x^{(m)}_k,x^{(m)}_{k+q(m)})\ge\delta>0$.

\section{Preliminaries}\label{sec:mdim-prelim}

We first record the cover-theoretic definition of mean dimension and the
standard properties needed to analyze the finite stages of the inverse
system.

For a finite open cover $\alpha$ of a compact space $Y$, let
\[
\ord(\alpha)=\sup_{y\in Y}\left(-1+\sum_{U\in\alpha}\mathbf 1_U(y)\right),
\]
and let
\[
D_Y(\alpha)=\inf_{\beta\succeq\alpha}\ord(\beta),
\]
where the infimum is over finite open covers $\beta$ of $Y$, and
$\beta\succeq\alpha$ means that $\beta$ refines $\alpha$.  For a
homeomorphism $S:Y\to Y$,
\[
\mdim(Y,S)
=
\sup_{\alpha}
\lim_{n\to\infty}
\frac{1}{n}
D_Y\left(\bigvee_{j=0}^{n-1}S^{-j}\alpha\right).
\]
The limit exists by the standard subadditivity argument.  We use the following
standard facts from mean-dimension theory; see, for example,
\cite{LW2000,Coornaert2005}.

\begin{enumerate}[label=(\roman*)]
\item If $Z\subseteq Y$ is closed and $S$-invariant, then
\[
\mdim(Z,S|_Z)\le \mdim(Y,S).
\]
\item For every positive integer $k$,
\begin{equation}\label{eq:iterate-scaling}
\mdim(Y,S^k)=k\,\mdim(Y,S).
\end{equation}
\item For $D\in\mathbb N$,
\begin{equation}\label{eq:cube-shift-mdim}
\mdim\left(([0,1]^D)^{\mathbb Z},\sigma\right)=D.
\end{equation}
More generally, if $K$ is finite-dimensional, then
\[
\mdim(K^{\mathbb Z},\sigma)\le \dim K.
\]
Tsukamoto determined the exact value \cite[Theorem~1.1]{Tsukamoto2019}: it is
$\dim K$ when $K$ is of basic type (i.e.,
$\dim(K\times K)=2\dim K$), and $\dim K-1$ when $K$ is of exceptional type.
Since $\Sone^N$ is of basic type, in particular,
\begin{equation}\label{eq:torus-upper}
\mdim\left((\Sone^N)^{\mathbb Z},\sigma\right)=N.
\end{equation}
\end{enumerate}

For completeness, the scaling identity in \eqref{eq:iterate-scaling} follows directly from the cover definition.  The inequality
$\mdim(Y,S^k)\le k\mdim(Y,S)$ follows because the join over times $0,k,2k,\ldots,(n-1)k$ is coarser than the join over all times $0,1,\ldots,kn-1$.  Conversely, for a cover $\alpha$, put
\[
\beta=\bigvee_{i=0}^{k-1}S^{-i}\alpha.
\]
Then
\[
\bigvee_{j=0}^{n-1}(S^k)^{-j}\beta
=
\bigvee_{i=0}^{kn-1}S^{-i}\alpha,
\]
which yields the reverse inequality after taking limits and suprema.

We will also use the following inverse-limit estimate
\cite[Proposition~5.8]{Shi2021}.

\begin{proposition}\label{prop:mdim-inverse}
Let
\[
(Y,S)=\varprojlim(Y_m,S_m)
\]
be an inverse limit of topological dynamical systems with surjective
bonding maps, and let $p_m:Y\to Y_m$ be the natural projections.  Then
\[
\mdim(Y,S)\le \sup_m\mdim(Y_m,S_m).
\]
\end{proposition}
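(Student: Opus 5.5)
We prove \cref{prop:mdim-inverse} from the cover definition by showing that every finite open cover of $Y$ is refined by a cover pulled back from a single finite stage. Since $Y\subseteq\prod_m Y_m$ carries the product topology and the bonding maps are compatible, the sets $p_m^{-1}(V)$, with $m\in\mathbb N$ and $V\subseteq Y_m$ open, form a base of $Y$. Moreover $p_m^{-1}(V)=p_{m'}^{-1}\bigl(\theta_{m',m}^{-1}(V)\bigr)$ for $m'\ge m$, where $\theta_{m',m}$ denotes the bonding map.

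Given a finite open cover $\alpha$ of $Y$, we use compactness of $Y$ to find finitely many basic sets, each contained in some member of $\alpha$, that together cover $Y$. Pushing them all to a common large index $m$ gives a finite open cover $\gamma$ of $Y_m$ such that $p_m^{-1}\gamma$ refines $\alpha$. For this to be a cover of $Y_m$ we need $p_m$ surjective. That holds because the bonding maps are surjective and the $Y_k$ are compact, so the inverse limit of nonempty compact fibres $p_m^{-1}(y)$ is nonempty.

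Next comes the dynamical comparison. Equivariance $p_m\circ S=S_m\circ p_m$ gives
\[
\bigvee_{j=0}^{n-1}S^{-j}\bigl(p_m^{-1}\gamma\bigr)=p_m^{-1}\Bigl(\bigvee_{j=0}^{n-1}S_m^{-j}\gamma\Bigr).
\]
The key inequality is $D_Y(p_m^{-1}\beta)\le D_{Y_m}(\beta)$ for any finite open cover $\beta$ of $Y_m$. To see it, take a refinement $\beta'$ of $\beta$ with $\ord(\beta')=D_{Y_m}(\beta)$. Then $p_m^{-1}\beta'$ refines $p_m^{-1}\beta$, and its order is at most $\ord(\beta')$, since the pulled-back sets containing a point $y$ are exactly the preimages of sets containing $p_m(y)$. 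Monotonicity of $D_Y$ under refinement then gives
\[
D_Y\Bigl(\bigvee_{j<n}S^{-j}\alpha\Bigr)\le D_{Y_m}\Bigl(\bigvee_{j<n}S_m^{-j}\gamma\Bigr).
\]
Dividing by $n$ and letting $n\to\infty$ shows that the contribution of $\alpha$ is at most $\mdim(Y_m,S_m)\le\sup_k\mdim(Y_k,S_k)$. Taking the supremum over $\alpha$ finishes the proof.

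The main obstacle is only bookkeeping in the first step: checking that the basic open sets can be pushed to a common index and that the pulled-back cover genuinely refines $\alpha$. Everything after that is formal monotonicity.
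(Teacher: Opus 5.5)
Your proof is correct. The paper does not reprove this statement but cites \cite[Proposition~5.8]{Shi2021}; your argument is the mean-dimension version of the paper's own proof of \cref{lem:Ddim-inverse}, which refines the cover by finitely many cylinders, pushes them to a common stage $m$ so that $p_m^{-1}\gamma\succeq\alpha$, and uses that pulling back refinements along the equivariant surjection $p_m$ does not increase order, so it is essentially the same approach.
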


\section{Every finite stage has mean dimension exactly \texorpdfstring{$N$}{N}}\label{sec:finite-mdim}

We first compute the mean dimension of the finite stages.  The upper bound follows from
the ambient torus shift, while the lower bound is obtained from an explicit
full-shift subsystem.

\begin{proposition}\label{prop:finite-stage}
For every positive integer $r$ and every $0<\delta<1$,
\[
\mdim(X(N,r,\delta),\sigma)=N.
\]
In particular,
\[
\mdim(X_m,T_m)=N
\qquad\text{for every }m\ge1.
\]
\end{proposition}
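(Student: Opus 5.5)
Upper bound and lower bound are proved separately. For the upper bound, $X(N,r,\delta)$ is a closed $\sigma$-invariant subset of $(\Sone^N)^{\mathbb Z}$. Property (i) of the preliminaries together with \eqref{eq:torus-upper} then gives $\mdim(X(N,r,\delta),\sigma)\le N$. The case $X_m=X(N,q(m),\delta)$ is the special instance $r=q(m)$, so the ``in particular'' follows once the general statement holds.

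For the lower bound I will exhibit inside $X(N,r,\delta)$ a closed subsystem conjugate to a full shift of the right dimension, using the power $\sigma^{2r}$ and \eqref{eq:iterate-scaling}. Fix a point $a\in\Sone^N$ and a closed cube $C\subset\Sone^N$ with $\rho_N(a,c)\ge\delta$ for all $c\in C$. This is possible since $\delta<1$ and the diameter of $\Sone$ is $1$. Concretely, take $a=0$ and $C=[\delta',2-\delta']^N$ with $\delta\le\delta'<1$ (choose $\delta'=(1+\delta)/2$); then $C$ is homeomorphic to $[0,1]^N$.

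Consider blocks of length $2r$ consisting of $r$ free symbols from $C$ followed by $r$ copies of $a$. Let $Z$ be the set of all $x\in(\Sone^N)^{\mathbb Z}$ with $x_k\in C$ when $k\bmod 2r\in\{0,\dots,r-1\}$ and $x_k=a$ otherwise. For every $k$, exactly one of $x_k,x_{k+r}$ lies in $C$ and the other equals $a$. Hence $\rho_N(x_k,x_{k+r})\ge\delta$ and $Z\subseteq X(N,r,\delta)$. The set $Z$ is closed and $\sigma^{2r}$-invariant. Reading off consecutive blocks gives a conjugacy $(Z,\sigma^{2r})\cong((C^r)^{\mathbb Z},\sigma)\cong(([0,1]^{rN})^{\mathbb Z},\sigma)$. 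By \eqref{eq:cube-shift-mdim} this has mean dimension $rN$.

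Then \eqref{eq:iterate-scaling} and monotonicity (i) applied to $\sigma^{2r}$ yield
\[
2r\,\mdim(X(N,r,\delta),\sigma)=\mdim(X(N,r,\delta),\sigma^{2r})\ge \mdim(Z,\sigma^{2r})=rN,
\]
so $\mdim\ge N/2$. This loses a factor of $2$, and removing it is the main obstacle: the alternating construction wastes half the coordinates.

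My first attempt at the fix is to make every symbol free within a small ball, using a varying reference instead of a constant one. Pick $b\in\Sone^N$ with $\rho_N(0,b)=1$, for example $b=(1,\dots,1)$. Let $B_0,B_1$ be closed $\epsilon$-balls about $0$ and $b$, where $2\epsilon\le 1-\delta$. Any point of $B_0$ and any point of $B_1$ are then at distance at least $1-2\epsilon\ge\delta$. Define $Z'$ by $x_k\in B_{\lfloor k/r\rfloor \bmod 2}$. For every $k$, the indices $k$ and $k+r$ fall in consecutive $r$-blocks, so they lie in different balls and the constraint holds. As before, $(Z',\sigma^{2r})\cong((B_0^r\times B_1^r)^{\mathbb Z},\sigma)\cong(([0,1]^{2rN})^{\mathbb Z},\sigma)$, which has mean dimension $2rN$. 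Hence $\mdim(X(N,r,\delta),\sigma)\ge N$, and the final write-up will use $Z'$ in place of $Z$.
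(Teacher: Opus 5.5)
Your final argument with $Z'$ is correct and is essentially the paper's proof. The paper also places a $2r$-periodic pattern of $r$ symbols from one closed cube followed by $r$ symbols from a separated cube inside $X(N,r,\delta)$. It conjugates the resulting $\sigma^{2r}$-subsystem to the full shift over $[0,1]^{2rN}$ and applies iterate scaling; the only cosmetic difference is that the paper separates the cubes in the first coordinate only, using $B=J\times I^{N-1}$ instead of the full ball about $(1,\dots,1)$. In the write-up, drop the preliminary construction $Z$ (which only gives $N/2$) and state explicitly that you take $\epsilon>0$, so that the balls are genuine $N$-cubes.
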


\begin{proof}
The upper bound follows from the inclusion
\[
X(N,r,\delta)\subseteq(\Sone^N)^{\mathbb Z}
\]
and \eqref{eq:torus-upper}:
\[
\mdim(X(N,r,\delta),\sigma)\le N.
\]

We prove the reverse inequality by placing a large cubical full shift inside the system for the iterate $\sigma^{2r}$.  Choose
\[
0<\varepsilon<\min\left\{\frac12,\frac{1-\delta}{2}\right\}.
\]
Let
\[
I=\{t\in\Sone:\rho(t,0)\le\varepsilon\},
\qquad
J=\{t\in\Sone:\rho(t,1)\le\varepsilon\}.
\]
Both $I$ and $J$ are closed arcs homeomorphic to $[0,1]$.  Define
\[
A=I^N,
\qquad
B=J\times I^{N-1}.
\]
Then $A$ and $B$ are each homeomorphic to $[0,1]^N$.  If $a\in A$ and $b\in B$, the first coordinates satisfy
\[
\rho(a_1,b_1)
\ge \rho(0,1)-\rho(a_1,0)-\rho(b_1,1)
\ge 1-2\varepsilon
>\delta.
\]
Therefore
\begin{equation}\label{eq:AB-separated}
\rho_N(a,b)>\delta
\qquad(a\in A,\ b\in B).
\end{equation}

Let $Y_r$ be the set of all $x\in(\Sone^N)^{\mathbb Z}$ such that, for every $k\in\mathbb Z$ and every $0\le j<r$,
\[
x_{2rk+j}\in A,
\qquad
x_{2rk+r+j}\in B.
\]
This is a closed $\sigma^{2r}$-invariant set.  For every integer $n$, the two positions $n$ and $n+r$ lie in opposite members of the pair $A,B$.  Hence \eqref{eq:AB-separated} gives
\[
Y_r\subseteq X(N,r,\delta).
\]

Group the coordinates into consecutive blocks of length $2r$.  The map
\[
\Psi:Y_r\longrightarrow (A^r\times B^r)^{\mathbb Z},
\qquad
\Psi(x)_k=(x_{2rk},x_{2rk+1},\ldots,x_{2rk+2r-1}),
\]
is a topological conjugacy from $(Y_r,\sigma^{2r})$ to the full shift on the alphabet $A^r\times B^r$.  Since
\[
A^r\times B^r\cong [0,1]^{2rN},
\]
we obtain from \eqref{eq:cube-shift-mdim} that
\[
\mdim(Y_r,\sigma^{2r})=2rN.
\]
By subsystem monotonicity and the iterate-scaling identity,
\[
2rN
\le
\mdim(X(N,r,\delta),\sigma^{2r})
=
2r\,\mdim(X(N,r,\delta),\sigma).
\]
Thus
\[
\mdim(X(N,r,\delta),\sigma)\ge N.
\]
Together with the upper bound, this proves equality.
\end{proof}

\section{Mean dimension of \texorpdfstring{$(\calX,T)$}{(X,T)}}\label{sec:inverse-mdim}

Having computed the mean dimension of every finite stage, we now pass to the
inverse limit.  We first prove a cover-theoretic lemma explaining why a
nonequivariant section suffices for a mean-dimension lower bound.

\begin{lemma}\label{lem:section-cover}
Let $p:Y\to Z$ be a continuous surjection between compact Hausdorff spaces.  Suppose that there is a continuous map $s:Z\to Y$ with $p\circ s=\id_Z$.  Then, for every finite open cover $\alpha$ of $Z$,
\begin{equation}\label{eq:D-equality}
D_Y(p^{-1}\alpha)=D_Z(\alpha).
\end{equation}
If, in addition, $p:(Y,S)\to(Z,R)$ is equivariant, then
\[
\mdim(Y,S)\ge \mdim(Z,R).
\]
The section $s$ need not be equivariant.
\end{lemma}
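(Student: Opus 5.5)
We prove \eqref{eq:D-equality} by establishing the two inequalities separately, and then deduce the mean-dimension statement by applying \eqref{eq:D-equality} to joins.

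For $D_Y(p^{-1}\alpha)\le D_Z(\alpha)$, take any finite open cover $\beta$ of $Z$ refining $\alpha$. Then $p^{-1}\beta=\{p^{-1}V:V\in\beta\}$ is a finite open cover of $Y$ refining $p^{-1}\alpha$. Since $p$ is surjective, the number of members of $p^{-1}\beta$ containing $y$ equals the number of members of $\beta$ containing $p(y)$. Hence $\ord(p^{-1}\beta)=\ord(\beta)$. Taking the infimum over $\beta$ gives the inequality.

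For the reverse inequality $D_Z(\alpha)\le D_Y(p^{-1}\alpha)$, we pull covers back along the section. Let $\gamma$ be a finite open cover of $Y$ refining $p^{-1}\alpha$. Then $s^{-1}\gamma=\{s^{-1}W:W\in\gamma\}$ is a finite open cover of $Z$ by continuity of $s$. It refines $\alpha$: if $W\subseteq p^{-1}U$, then $s^{-1}W\subseteq s^{-1}p^{-1}U=(p\circ s)^{-1}U=U$. The number of members of $s^{-1}\gamma$ containing $z$ equals the number of members of $\gamma$ containing $s(z)$, which is at most $\ord(\gamma)+1$. Thus $\ord(s^{-1}\gamma)\le\ord(\gamma)$. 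Taking the infimum over $\gamma$ finishes the proof of \eqref{eq:D-equality}.

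The only bookkeeping point is that repeated sets in $s^{-1}\gamma$ should be counted with multiplicity, or discarded. Either convention only lowers the order, so this does not affect the inequality.

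For the dynamical statement, fix a finite open cover $\alpha$ of $Z$. Equivariance gives
\[
p^{-1}\Bigl(\bigvee_{j=0}^{n-1}R^{-j}\alpha\Bigr)=\bigvee_{j=0}^{n-1}S^{-j}p^{-1}\alpha .
\]
Applying \eqref{eq:D-equality} to the join cover, which is still a finite open cover of $Z$, shows that the normalized quantities for $\alpha$ on $Z$ and for $p^{-1}\alpha$ on $Y$ agree for every $n$. Dividing by $n$ and letting $n\to\infty$, then taking the supremum over $\alpha$, bounds $\mdim(Z,R)$ by a supremum over the subfamily of covers of $Y$ of the form $p^{-1}\alpha$. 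That supremum is at most $\mdim(Y,S)$.

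Only the second inequality uses the section, and the section is used only pointwise and topologically. This is why $s$ need not be equivariant: equivariance enters solely through the identity for $p^{-1}$ of joins. No step here seems genuinely hard. The main care point is to check that the refinement relation survives the pullback by $s$, and this rests exactly on $p\circ s=\id_Z$.
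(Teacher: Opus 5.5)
Your proposal is correct and follows essentially the same route as the paper. You pull refinements back along $p$ to get $D_Y(p^{-1}\alpha)\le D_Z(\alpha)$. You pull refinements of $p^{-1}\alpha$ back along the section $s$, using $p\circ s=\id_Z$ for the refinement and the pointwise correspondence $z\in s^{-1}W\iff s(z)\in W$ for the order bound, with the same caveat about repeated preimages. You then apply the equality to joins via the identity $p^{-1}\bigl(\bigvee_{j}R^{-j}\alpha\bigr)=\bigvee_{j}S^{-j}p^{-1}\alpha$ coming from equivariance.
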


\begin{proof}
Pulling back refinements gives
\[
D_Y(p^{-1}\alpha)\le D_Z(\alpha).
\]
For the reverse inequality, let $\beta$ be any finite open refinement of
$p^{-1}\alpha$ on $Y$.  Consider the finite open cover
\[
s^{-1}\beta=\{s^{-1}(B):B\in\beta\}
\]
of $Z$, with empty members omitted.  To see that it refines $\alpha$, fix
$B\in\beta$.  Since $\beta\succeq p^{-1}\alpha$, there is some
$U_B\in\alpha$ such that $B\subseteq p^{-1}(U_B)$.  Therefore
\[
s^{-1}(B)\subseteq s^{-1}\bigl(p^{-1}(U_B)\bigr)
=(p\circ s)^{-1}(U_B)=U_B.
\]
By the definition of inverse image, for every $z\in Z$ and $B\in\beta$,
\[
z\in s^{-1}(B)\quad\Longleftrightarrow\quad s(z)\in B.
\]
Thus the map $B\mapsto s^{-1}(B)$ sends the members of $\beta$ containing
$s(z)$ onto the members of $s^{-1}\beta$ containing $z$.  (Different members
of $\beta$ may have the same inverse image.)  Hence $s^{-1}\beta$ is an open
refinement of $\alpha$ whose order is no larger than that of $\beta$.
By the definition of $D_Z(\alpha)$,
\[
D_Z(\alpha)\le \ord(s^{-1}\beta)\le \ord(\beta).
\]
Since $\beta$ was an arbitrary finite open refinement of
$p^{-1}\alpha$ on $Y$, taking the infimum over all such $\beta$ gives
\[
D_Z(\alpha)
\le
\inf_{\beta\succeq p^{-1}\alpha}\ord(\beta)
=D_Y(p^{-1}\alpha).
\]
Together with the opposite inequality proved above, this yields
\eqref{eq:D-equality}.

If $p$ is equivariant, then
\[
\bigvee_{j=0}^{n-1}S^{-j}p^{-1}\alpha
=
p^{-1}\left(\bigvee_{j=0}^{n-1}R^{-j}\alpha\right).
\]
Applying \eqref{eq:D-equality} with
$\bigvee_{j=0}^{n-1}R^{-j}\alpha$ in place of $\alpha$ gives, for every
$n\ge1$,
\[
D_Y\left(\bigvee_{j=0}^{n-1}S^{-j}p^{-1}\alpha\right)
=
D_Z\left(\bigvee_{j=0}^{n-1}R^{-j}\alpha\right).
\]
After dividing by $n$ and taking the limit, we obtain
\[
\lim_{n\to\infty}\frac{1}{n}
D_Y\left(\bigvee_{j=0}^{n-1}S^{-j}p^{-1}\alpha\right)
=
\lim_{n\to\infty}\frac{1}{n}
D_Z\left(\bigvee_{j=0}^{n-1}R^{-j}\alpha\right).
\]
The left-hand side is one of the quantities over which the supremum defining
$\mdim(Y,S)$ is taken, because $p^{-1}\alpha$ is a finite open cover of
$Y$.  Hence, taking the supremum over all finite open covers $\alpha$ of
$Z$, we get
\[
\mdim(Y,S)
\ge
\sup_{\alpha}
\lim_{n\to\infty}\frac{1}{n}
D_Z\left(\bigvee_{j=0}^{n-1}R^{-j}\alpha\right)
=\mdim(Z,R).
\]
\end{proof}

We now construct the sections needed to apply the lemma to the natural
projections.  The right
inverses of the bonding maps constructed in our earlier paper provide these
sections.  More precisely, for $n>m$ let
\[
\eta_{m,n}=\eta_{n-1,n}\circ\cdots\circ\eta_{m,m+1}.
\]
Then define
\begin{equation}\label{eq:gamma}
\gamma_m:X_m\longrightarrow\calX,
\qquad
\gamma_m(x)=
(\theta_{m,1}x,\ldots,\theta_{m,m-1}x,x,
\eta_{m,m+1}x,\eta_{m,m+2}x,\ldots).
\end{equation}
The compatibility relations and the identities
$\theta_{j,j-1}\eta_{j-1,j}=\id$ show that \eqref{eq:gamma} takes values in the inverse limit.  We proved the continuity of the maps $\eta_{j-1,j}$ and hence of $\gamma_m$ in Lemmas~5.2--5.3 and the paragraph preceding Lemma~5.7 of \cite{Shi2021}.  By construction,
\begin{equation}\label{eq:section-pim}
\pi_m\circ\gamma_m=\id_{X_m}.
\end{equation}
The map $\gamma_m$ is generally not equivariant, but \cref{lem:section-cover} shows that equivariance of the section is unnecessary.

\begin{proposition}\label{prop:inverse-mdim}
The dynamical system satisfies
\[
\mdim(\calX,T)=N.
\]
\end{proposition}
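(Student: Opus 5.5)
The plan is to prove the two inequalities separately, using only results already established. For the upper bound, apply \cref{prop:mdim-inverse} to the inverse system $(X_m,T_m)$. Its hypotheses hold because every bonding map $\theta_{m,m-1}$ is surjective: the continuous right inverses $\eta_{m-1,m}$ satisfy $\theta_{m,m-1}\circ\eta_{m-1,m}=\id_{X_{m-1}}$. By \cref{prop:finite-stage}, $\mdim(X_m,T_m)=N$ for every $m$. Hence
\[
\mdim(\calX,T)\le\sup_m\mdim(X_m,T_m)=N .
\]

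For the lower bound, fix any $m$, for instance $m=1$. Consider the natural projection $\pi_m:\calX\to X_m$.
\begin{enumerate}[label=(\alph*)]
\item It is continuous.
\item It is equivariant, $\pi_m\circ T=T_m\circ\pi_m$, since $T$ acts coordinatewise by the shift.
\item It is surjective, because by \eqref{eq:section-pim} the map $\gamma_m$ of \eqref{eq:gamma} is a continuous section: $\pi_m\circ\gamma_m=\id_{X_m}$.
\end{enumerate}
Equivariance of $\gamma_m$ is not needed. \Cref{lem:section-cover} therefore applies with $p=\pi_m$ and $s=\gamma_m$, and it gives
\[
\mdim(\calX,T)\ge\mdim(X_m,T_m)=N,
\]
again using \cref{prop:finite-stage}.

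Combining the two bounds yields $\mdim(\calX,T)=N$. There is no real obstacle at this point, since the substantive work sits in \cref{lem:section-cover} and in the continuity of the $\gamma_m$ taken from \cite{Shi2021}. The only check is that $\gamma_m$ really lands in $\calX$ and is continuous into the product topology. The first holds because consecutive coordinates satisfy the compatibility relations, by $\theta_{j,j-1}\eta_{j-1,j}=\id$ for the later coordinates and by the definition of $\theta_{m,n}$ for the earlier ones. Continuity holds because each coordinate is a finite composition of continuous maps. Both points were already noted before the statement, so the proof reduces to the two displayed inequalities.
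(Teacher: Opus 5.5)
Your proposal is correct and follows essentially the same route as the paper: you get the upper bound from \cref{prop:mdim-inverse} combined with \cref{prop:finite-stage}, and the lower bound from \cref{lem:section-cover} applied to $\pi_m$ with the non-equivariant continuous section $\gamma_m$. Your added checks (that the bonding maps are surjective, that $\pi_m$ is surjective, and that $\gamma_m$ lands in $\calX$ and is continuous) are the same facts the paper records just before the statement.
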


\begin{proof}
Since $(\calX,T)$ is the inverse limit of the systems $(X_m,T_m)$,
\cref{prop:mdim-inverse} gives
\[
\mdim(\calX,T)
\le
\sup_m\mdim(X_m,T_m)
=N,
\]
where the last equality is \cref{prop:finite-stage}.

For the reverse inequality, fix $m$.  The projection
\[
\pi_m:(\calX,T)\longrightarrow(X_m,T_m)
\]
is equivariant and, by \eqref{eq:section-pim}, has a continuous section.  Therefore \cref{lem:section-cover} gives
\[
\mdim(\calX,T)\ge\mdim(X_m,T_m)=N.
\]
Combining the two inequalities proves the result.
\end{proof}

\section{Meyerovitch's dynamical dimension and inverse limits}\label{sec:Ddim-inverse}

We recall Meyerovitch's dynamical dimension and the properties needed below
\cite{Meyerovitch2026}.  Its finiteness yields an almost embedding into a
finite-dimensional cubical shift, while its monotonicity provides an
obstruction to equivariant embeddings.

For a finite open cover $\mathcal U$ of a dynamical system $(Y,S)$, set
\[
\ord(\mathcal V,y)
=-1+\sum_{V\in\mathcal V}\mathbf 1_V(y)
\]
and
\[
\Ddim(\mathcal U;Y,S)
=
\inf_{\mathcal V\succeq\mathcal U}
\sup_{\mu\in\Prob(Y,S)}
\int_Y \ord(\mathcal V,y)\,d\mu(y),
\]
where $\Prob(Y,S)$ is the set of $S$-invariant Borel probability measures and
the infimum is over finite open covers $\mathcal V$ refining $\mathcal U$.
Writing $\Cov(Y)$ for the set of finite open covers of $Y$, define
\[
\Ddim(Y,S)=\sup_{\mathcal U\in\Cov(Y)}\Ddim(\mathcal U;Y,S).
\]
We use the following results from \cite{Meyerovitch2026}:
\begin{enumerate}[label=(\roman*)]
\item $\Ddim$ is monotone under equivariant topological embeddings
\cite[Proposition~3.1]{Meyerovitch2026};
\item
\begin{equation}\label{eq:Ddim-fixed-point}
\dim\operatorname{Fix}(S)\le\Ddim(Y,S),
\qquad
\operatorname{Fix}(S)=\{y\in Y:S y=y\};
\end{equation}
this fixed-point lower bound follows from the argument in the proof of
\cite[Theorem~8.1]{Meyerovitch2026};
\item
\begin{equation}\label{eq:Ddim-cube}
\Ddim\left(([0,1]^d)^{\mathbb Z},\sigma\right)=d;
\end{equation}
this is \cite[Theorem~8.1]{Meyerovitch2026};
\item for every $\mathbb Z$-system,
$\mdim(Y,S)\le\Ddim(Y,S)$
\cite[Theorem~9.1]{Meyerovitch2026};
\item if $\Ddim(Y,S)<d/2$, then there
is an equivariant almost embedding
\[
(Y,S)\longrightarrow(([0,1]^d)^{\mathbb Z},\sigma).
\]
Here ``almost embedding'' means that, for every invariant probability measure
$\mu$ on $Y$, the map induces a measure-theoretic isomorphism between
$(Y,S,\mu)$ and its pushforward system
\cite[Theorem~10.1]{Meyerovitch2026}.
\end{enumerate}

For a continuous equivariant map $a:(Y,S)\to(W,R)$, write
\[
Y\times_aY=\{(u,v)\in Y\times Y:a(u)=a(v)\}
\]
for its fiber product.  Equivalently, $a$ is an almost embedding if and only
if every $(S\times S)$-invariant probability measure on $Y\times_aY$ is
supported on the diagonal $\Delta_Y$
\cite[Proposition~10.1]{Meyerovitch2026}.

We need the following inverse-limit estimate, which is analogous to the
mean-dimension estimate in \cref{prop:mdim-inverse} but uses invariant measures
explicitly.

\begin{lemma}\label{lem:Ddim-inverse}
Let
\[
(Y,S)=\varprojlim(Y_m,S_m)
\]
be an inverse limit of topological dynamical systems whose bonding maps are
surjective, and let $p_m:Y\to Y_m$ be the equivariant projections.  Then
\[
\Ddim(Y,S)\le \sup_m\Ddim(Y_m,S_m).
\]
\end{lemma}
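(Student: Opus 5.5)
Fix a finite open cover $\mathcal U$ of $Y$. The plan is to reduce $\mathcal U$ to a cover pulled back from a single stage. By compactness, and since $Y$ carries the inverse-limit topology, the sets $p_m^{-1}(W)$ with $W$ open in $Y_m$ form a base that is closed under finite unions and increasing in $m$. Take a Lebesgue-type argument: each $y$ lies in some $U\in\mathcal U$, and then in a basic neighbourhood $p_{m_y}^{-1}(W_y)\subseteq U$. Extract a finite subcover and pass to a common level $m$ by pulling the $W_y$ back along the bonding maps. This produces a finite open cover $\mathcal W$ of $Y_m$ (we may assume $p_m$ surjective, which holds because the bonding maps are surjective and the spaces are compact) such that $p_m^{-1}\mathcal W\succeq\mathcal U$. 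Since refining the target cover can only increase $\Ddim(\cdot;Y,S)$, we get $\Ddim(\mathcal U;Y,S)\le \Ddim(p_m^{-1}\mathcal W;Y,S)$.

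Next we compare $\Ddim(p_m^{-1}\mathcal W;Y,S)$ with $\Ddim(\mathcal W;Y_m,S_m)$. Given $\varepsilon>0$, choose a finite open cover $\mathcal V\succeq\mathcal W$ of $Y_m$ with
\[
\sup_{\nu\in\Prob(Y_m,S_m)}\int\ord(\mathcal V,z)\,d\nu(z)\le \Ddim(\mathcal W;Y_m,S_m)+\varepsilon .
\]
Then $p_m^{-1}\mathcal V$ refines $p_m^{-1}\mathcal U$'s comparison cover $p_m^{-1}\mathcal W$, and hence refines $\mathcal U$. Moreover, $\ord(p_m^{-1}\mathcal V,y)=\ord(\mathcal V,p_m y)$ pointwise. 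One caveat: distinct members of $\mathcal V$ have distinct preimages because $p_m$ is surjective, and even if they did not, the count defining the order only decreases. For $\mu\in\Prob(Y,S)$, equivariance of $p_m$ gives $(p_m)_*\mu\in\Prob(Y_m,S_m)$, and therefore
\[
\int_Y\ord(p_m^{-1}\mathcal V,y)\,d\mu=\int_{Y_m}\ord(\mathcal V,z)\,d(p_m)_*\mu\le \Ddim(\mathcal W;Y_m,S_m)+\varepsilon.
\]
Taking the supremum over $\mu$ and then letting $\varepsilon\to0$ yields $\Ddim(\mathcal U;Y,S)\le\Ddim(\mathcal W;Y_m,S_m)\le\Ddim(Y_m,S_m)$. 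Taking the supremum over $\mathcal U$ finishes the proof.

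The only real point is the first step: showing that every finite open cover of the inverse limit is refined by a cover pulled back from one stage. This is a standard compactness argument. It uses the facts that $p_n^{-1}(W)=p_m^{-1}(\theta_{m,n}^{-1}W)$ for $m\ge n$, and that the cylinder sets form a base. Notably, the measure part requires no surjectivity of $(p_m)_*$ on measures, only the pushforward direction.
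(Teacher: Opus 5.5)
Your proof is correct and takes essentially the same route as the paper. Compactness and the cylinder base reduce $\mathcal U$ to a pulled-back cover $p_m^{-1}\mathcal W\succeq\mathcal U$ from a single level, using surjectivity of $p_m$. After that, the pointwise identity $\ord(p_m^{-1}\mathcal V,y)=\ord(\mathcal V,p_m y)$ and pushforward of invariant measures give $\Ddim(\mathcal U;Y,S)\le\Ddim(\mathcal W;Y_m,S_m)$. Your $\varepsilon$-optimal choice of $\mathcal V$ is just a repackaging of the paper's infimum over all refinements $\mathcal B\succeq\mathcal A$.

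The only blemish is notational: ``$p_m^{-1}\mathcal U$'s comparison cover'' should read simply $p_m^{-1}\mathcal V\succeq p_m^{-1}\mathcal W\succeq\mathcal U$, since $\mathcal U$ is already a cover of $Y$.
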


\begin{proof}
Let $\mathcal U$ be a finite open cover of $Y$, and write
$\tau_{m,n}:Y_m\to Y_n$ for the bonding maps.  Since $Y$ has the subspace
topology inherited from $\prod_{m\ge1}Y_m$, open cylinder sets determined by
finitely many coordinates form a basis.  Thus, for each $y\in Y$, choose
$U(y)\in\mathcal U$, finitely many indices
$n_{y,1},\ldots,n_{y,r_y}$, and open sets
$V_{y,j}\subseteq Y_{n_{y,j}}$ such that
\[
B_y=\bigcap_{j=1}^{r_y}p_{n_{y,j}}^{-1}(V_{y,j})
\]
is an open cylinder satisfying $y\in B_y\subseteq U(y)$.  Hence
$\{B_y:y\in Y\}$ is an open cover of $Y$.  By compactness, there are points
$y_1,\ldots,y_\ell\in Y$ such that
\[
Y=\bigcup_{i=1}^{\ell}B_{y_i}.
\]
Set
\[
m=\max\{n_{y_i,j}:1\le i\le\ell,\ 1\le j\le r_{y_i}\}.
\]
Then all the selected cylinder sets depend only on coordinates at levels at
most $m$, and we may put
\[
A_i=\bigcap_{j=1}^{r_{y_i}}
\tau_{m,n_{y_i,j}}^{-1}(V_{y_i,j})\subseteq Y_m
\qquad(1\le i\le\ell).
\]
The identity $p_n=\tau_{m,n}\circ p_m$ gives
$B_{y_i}=p_m^{-1}(A_i)$.  Since $p_m$ is surjective and the $B_{y_i}$
cover $Y$, the sets $A_1,\ldots,A_\ell$ form a finite open cover
$\mathcal A$ of $Y_m$.  Moreover,
\begin{equation}\label{eq:single-stage-cover}
p_m^{-1}\mathcal A\succeq\mathcal U.
\end{equation}

It follows that
\[
\Ddim(\mathcal U;Y,S)
\le
\Ddim(p_m^{-1}\mathcal A;Y,S).
\]
Let $\mathcal B\succeq\mathcal A$ be a finite open cover of $Y_m$.  Then
$p_m^{-1}\mathcal B\succeq p_m^{-1}\mathcal A$, and for every $\mu\in\Prob(Y,S)$,
\begin{align*}
\int_Y\ord(p_m^{-1}\mathcal B,y)\,d\mu(y)
&=
\int_{Y_m}\ord(\mathcal B,z)\,d((p_m)_*\mu)(z)\\
&\le
\sup_{\nu\in\Prob(Y_m,S_m)}
\int_{Y_m}\ord(\mathcal B,z)\,d\nu(z).
\end{align*}
The pushforward $(p_m)_*\mu$ is $S_m$-invariant because $p_m$ is
equivariant.  For each $\mathcal B\succeq\mathcal A$, the pullback
$p_m^{-1}\mathcal B$ is an admissible refinement in the definition of
$\Ddim(p_m^{-1}\mathcal A;Y,S)$.  Therefore
\begin{align*}
\Ddim(p_m^{-1}\mathcal A;Y,S)
&\le
\inf_{\mathcal B\succeq\mathcal A}
\sup_{\mu\in\Prob(Y,S)}
\int_Y\ord(p_m^{-1}\mathcal B,y)\,d\mu(y)\\
&\le
\inf_{\mathcal B\succeq\mathcal A}
\sup_{\nu\in\Prob(Y_m,S_m)}
\int_{Y_m}\ord(\mathcal B,z)\,d\nu(z)\\
&=
\Ddim(\mathcal A;Y_m,S_m)
\le
\Ddim(Y_m,S_m).
\end{align*}
Combining this with the preceding inequality, we obtain
\[
\Ddim(\mathcal U;Y,S)
\le
\Ddim(Y_m,S_m)
\le
\sup_k\Ddim(Y_k,S_k).
\]
Although the index $m$ may depend on $\mathcal U$, the last upper bound does
not.  Therefore, taking the supremum over all finite open covers
$\mathcal U$ of $Y$ gives
\[
\Ddim(Y,S)
=
\sup_{\mathcal U\in\Cov(Y)}\Ddim(\mathcal U;Y,S)
\le
\sup_k\Ddim(Y_k,S_k),
\]
as required.
\end{proof}

\section{Dynamical dimension of full shifts and of the inverse-limit system}\label{sec:Ddim-full-shift}

The inverse-limit estimate reduces the desired upper bound to estimates on
the finite stages.  Since each stage lies in a full shift over $\Sone^N$, we
next extend \eqref{eq:Ddim-cube} from cubical alphabets to arbitrary
finite-dimensional compact metrizable alphabets.  Together with the fixed-point
lower bound, this also settles the infinite-dimensional case.  The key
observation for the finite-dimensional upper bound is that zero-dimensional
extensions do not increase dynamical dimension.

\begin{lemma}\label{lem:Ddim-zero-extension}
Let $p:(Y,S)\to(Z,R)$ be a factor map.  If every fiber of $p$ has covering
dimension zero,
then
\[
\Ddim(Y,S)\le \Ddim(Z,R).
\]
\end{lemma}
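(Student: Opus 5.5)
We reduce the bound to an estimate for a single cover. Fix a finite open cover $\mathcal U$ of $Y$. The plan has three steps: find a cover $\mathcal A$ of $Z$ such that each preimage $p^{-1}(A)$ splits into finitely many disjoint open pieces subordinate to $\mathcal U$; pull back any refinement of $\mathcal A$; and compare the integrated orders.

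\textbf{Step 1: the cover $\mathcal A$.} For each $z\in Z$ the fiber $F_z=p^{-1}(z)$ is compact and has covering dimension zero. Hence $F_z$ admits a finite cover by pairwise disjoint relatively clopen sets, each contained in a member of $\mathcal U$. Separate these finitely many disjoint compact pieces in $Y$ by pairwise disjoint open sets $W_{z,1},\dots,W_{z,k_z}$, with each $W_{z,i}$ contained in a member of $\mathcal U$. The union $W_z=\bigcup_i W_{z,i}$ is an open neighbourhood of $F_z$. Since $p$ is a closed map, the set $O_z=Z\setminus p(Y\setminus W_z)$ is an open neighbourhood of $z$ with $p^{-1}(O_z)\subseteq W_z$. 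By compactness, choose $z_1,\dots,z_\ell$ with $Z=\bigcup_j O_{z_j}$, and let $\mathcal A=\{O_{z_j}\}$.

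\textbf{Step 2: pulling back a refinement.} Let $\mathcal B\succeq\mathcal A$ be any finite open cover of $Z$. For each $B\in\mathcal B$ choose $j(B)$ with $B\subseteq O_{z_{j(B)}}$. Define
\[
\mathcal V=\{\,p^{-1}(B)\cap W_{z_{j(B)},i}: B\in\mathcal B,\ 1\le i\le k_{z_{j(B)}}\,\}.
\]
This is a finite open cover of $Y$ refining $\mathcal U$. The key point is that, for a fixed $B$, the pieces $p^{-1}(B)\cap W_{z_{j(B)},i}$ are pairwise disjoint and cover $p^{-1}(B)$. Consequently each $y$ lies in exactly one piece for each $B\ni p(y)$, and
\[
\ord(\mathcal V,y)=\ord(\mathcal B,p(y)).
\]

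\textbf{Step 3: comparing integrals.} For every $\mu\in\Prob(Y,S)$,
\[
\int_Y\ord(\mathcal V,y)\,d\mu=\int_Z\ord(\mathcal B,z)\,d(p_*\mu),
\]
and $p_*\mu$ is $R$-invariant. Therefore
\[
\sup_{\mu\in\Prob(Y,S)}\int_Y\ord(\mathcal V,\cdot)\,d\mu\le\sup_{\nu\in\Prob(Z,R)}\int_Z\ord(\mathcal B,\cdot)\,d\nu.
\]
Taking the infimum over $\mathcal B$ gives $\Ddim(\mathcal U;Y,S)\le\Ddim(\mathcal A;Z,R)\le\Ddim(Z,R)$. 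Taking the supremum over $\mathcal U$ then proves the lemma.

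\textbf{Main obstacle.} The step that carries the argument is Step 1: passing from zero-dimensional fibers to a \emph{uniform} splitting over a whole neighbourhood of each $z$. This uses only that a compact zero-dimensional metrizable space has a base of clopen sets, together with the closedness of $p$; neither invariance nor dynamics enters here. Everything after Step 1 is the same pushforward computation already carried out in the proof of \cref{lem:Ddim-inverse}.
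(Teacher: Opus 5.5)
Your proposal is correct and follows the paper's proof essentially step for step. The shared steps are: the clopen splitting of each fiber, separation of the pieces by disjoint open sets inside members of $\mathcal U$, the neighbourhood $Z\setminus p(Y\setminus W_z)$ obtained from closedness of $p$, the piecewise pullback of an arbitrary refinement of $\mathcal A$, and pushing invariant measures forward.

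One small correction: the identity $\ord(\mathcal V,y)=\ord(\mathcal B,p(y))$ holds only if $\mathcal V$ is counted as an indexed family. Distinct pairs $(B,i)$ can produce the same set, so for $\mathcal V$ as a collection of sets you only get $\ord(\mathcal V,y)\le\ord(\mathcal B,p(y))$. This inequality is exactly what the paper records, and it is all that Step 3 needs.
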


\begin{proof}
Fix a finite open cover $\mathcal U$ of $Y$, and let $z\in Z$.  Write
$F_z=p^{-1}(z)$.  Since $F_z$ is compact and zero-dimensional, every point
$x\in F_z$ has a clopen neighborhood $D_x$ in $F_z$ contained in
$U_x\cap F_z$ for some $U_x\in\mathcal U$.  Choose finitely many of the
$D_x$ that cover $F_z$.  After successively replacing each set by the part
not covered by the preceding ones and discarding empty sets, we obtain a
finite clopen partition subordinate to $\mathcal U$:
\[
F_z=C_{z,1}\sqcup\cdots\sqcup C_{z,r_z},
\qquad C_{z,i}\subset U_{z,i}\in\mathcal U.
\]
Each $C_{z,i}$ is compact.  Since $Y$ is normal, a finite shrinking argument
applied to the finitely many pairwise disjoint compact sets $C_{z,i}\subset
U_{z,i}$ gives pairwise disjoint open sets $O_{z,i}$ in $Y$ such that
\[
C_{z,i}\subset O_{z,i}\subset\overline{O_{z,i}}\subset U_{z,i}
\qquad(1\le i\le r_z).
\]
Put $O_z=\bigcup_{i=1}^{r_z}O_{z,i}$.  The map $p$ is closed because it is a
continuous map from a compact space to a Hausdorff space.  Hence
\[
V_z=Z\setminus p(Y\setminus O_z)
\]
is open.  Moreover, $F_z\subset O_z$, so
$z\notin p(Y\setminus O_z)$ and hence $z\in V_z$.  If $y\in p^{-1}(V_z)$
but $y\notin O_z$, then $p(y)\in p(Y\setminus O_z)$, contradicting
$p(y)\in V_z$.  Thus
\[
p^{-1}(V_z)\subset O_z.
\]
The sets $\{V_z:z\in Z\}$ form an open cover of the compact space $Z$.
Choose $z_1,\ldots,z_s$ such that
\[
\mathcal V=\{V_{z_1},\ldots,V_{z_s}\}
\]
still covers $Z$.

Let $\mathcal W\succeq\mathcal V$ be any finite open cover.  For every
$W\in\mathcal W$, choose $a(W)$ with $W\subset V_{z_{a(W)}}$, and define
\[
\widetilde{\mathcal W}
=
\left\{
p^{-1}(W)\cap O_{z_{a(W)},i}:
W\in\mathcal W,\ 1\le i\le r_{z_{a(W)}}
\right\},
\]
discarding empty sets.  We verify that this is an open cover of $Y$.  Given
$y\in Y$, choose $W\in\mathcal W$ with $p(y)\in W$.  Then
\[
y\in p^{-1}(W)
\subset p^{-1}(V_{z_{a(W)}})
\subset O_{z_{a(W)}}.
\]
Since the sets $O_{z_{a(W)},i}$ are pairwise disjoint and their union is
$O_{z_{a(W)}}$, there is a unique $i$ such that
$y\in p^{-1}(W)\cap O_{z_{a(W)},i}$.  Hence
$\widetilde{\mathcal W}$ covers $Y$.  Every member of
$\widetilde{\mathcal W}$ is contained in some
$O_{z_{a(W)},i}\subset U_{z_{a(W)},i}$, so
\[
\widetilde{\mathcal W}\succeq\mathcal U.
\]
For a fixed $y\in Y$ and each $W\in\mathcal W$ containing $p(y)$, exactly one
set of $\widetilde{\mathcal W}$ indexed by $W$ contains $y$.  Different
indices may define the same set after empty sets and repetitions are removed,
so the number of members of $\widetilde{\mathcal W}$ containing $y$ is no
larger than the number of members of $\mathcal W$ containing $p(y)$.
Therefore
\begin{equation}\label{eq:zero-extension-order}
\ord(\widetilde{\mathcal W},y)
\le \ord(\mathcal W,p(y))
\qquad(y\in Y).
\end{equation}

If $\mu\in\Prob(Y,S)$, equivariance of $p$ implies that
$p_*\mu\in\Prob(Z,R)$.  Integrating \eqref{eq:zero-extension-order} gives
\[
\int_Y\ord(\widetilde{\mathcal W},y)\,d\mu(y)
\le
\int_Z\ord(\mathcal W,z)\,d(p_*\mu)(z).
\]
Taking the supremum over $\mu\in\Prob(Y,S)$, and then enlarging the set of
measures on the right from the pushforwards $p_*\mu$ to all invariant
measures on $Z$, yields
\[
\sup_{\mu\in\Prob(Y,S)}
\int_Y\ord(\widetilde{\mathcal W},y)\,d\mu(y)
\le
\sup_{\nu\in\Prob(Z,R)}
\int_Z\ord(\mathcal W,z)\,d\nu(z).
\]
Since $\widetilde{\mathcal W}\succeq\mathcal U$, it is an admissible
refinement in the definition of $\Ddim(\mathcal U;Y,S)$.  Therefore, for
every $\mathcal W\succeq\mathcal V$,
\[
\Ddim(\mathcal U;Y,S)
\le
\sup_{\nu\in\Prob(Z,R)}
\int_Z\ord(\mathcal W,z)\,d\nu(z).
\]
Taking the infimum over all finite open covers
$\mathcal W\succeq\mathcal V$ gives
\[
\Ddim(\mathcal U;Y,S)
\le
\Ddim(\mathcal V;Z,R)
\le
\Ddim(Z,R).
\]
This bound holds for every finite open cover $\mathcal U$ of $Y$.  Taking
the supremum over $\mathcal U\in\Cov(Y)$ yields
\[
\Ddim(Y,S)\le\Ddim(Z,R).
\]
\end{proof}

The preceding lemma allows a finite-dimensional alphabet to be replaced by a
cubical image without increasing the dynamical dimension of its full shift.
Together with the fixed-point lower bound, this computes the invariant for
every compact metrizable alphabet, including infinite-dimensional ones.

\begin{theorem}\label{thm:Ddim-full-shift}
Let $K$ be a nonempty compact metrizable space.  Then
\[
\Ddim(K^{\mathbb Z},\sigma)=\dim K,
\]
where both sides are allowed to be infinite.
\end{theorem}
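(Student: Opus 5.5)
The plan is to prove the two inequalities separately. The lower bound $\dim K\le\Ddim(K^{\mathbb Z},\sigma)$ comes from the fixed-point bound \eqref{eq:Ddim-fixed-point}. The upper bound for finite-dimensional $K$ comes from \cref{lem:Ddim-zero-extension} combined with the cube computation \eqref{eq:Ddim-cube}. If $\dim K=\infty$, only the lower bound is needed, and it then forces $\Ddim=\infty$.

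\emph{Lower bound.} The fixed points of $\sigma$ on $K^{\mathbb Z}$ are exactly the constant sequences. The map $k\mapsto(k)_{n\in\mathbb Z}$ is a homeomorphism of $K$ onto $\operatorname{Fix}(\sigma)$, so $\dim\operatorname{Fix}(\sigma)=\dim K$. Then \eqref{eq:Ddim-fixed-point} gives $\dim K\le\Ddim(K^{\mathbb Z},\sigma)$, and this holds whether $\dim K$ is finite or infinite. In particular, $\dim K=\infty$ already yields equality.

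\emph{Upper bound when $d=\dim K<\infty$.} I will use the classical fact from dimension theory that a compact metrizable space of dimension $d$ admits a continuous map $g:K\to[0,1]^d$ all of whose fibers are zero-dimensional. This follows from the Hurewicz theorem on dimension-lowering maps: by Hurewicz's formula, a generic map into $[0,1]^d$ has fibers of dimension at most $\dim K-d=0$. The map $g$ itself need not be surjective. The coordinatewise map $G=g^{\mathbb Z}:K^{\mathbb Z}\to(g(K))^{\mathbb Z}$ is then an equivariant continuous surjection. Each fiber of $G$ is a countable product $\prod_n g^{-1}(t_n)$ of compact zero-dimensional metrizable spaces, and such a product is again zero-dimensional. \cref{lem:Ddim-zero-extension} therefore gives $\Ddim(K^{\mathbb Z},\sigma)\le\Ddim(g(K)^{\mathbb Z},\sigma)$. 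Since $g(K)^{\mathbb Z}$ is a closed invariant subset of $([0,1]^d)^{\mathbb Z}$, monotonicity under embeddings (property (i)) and \eqref{eq:Ddim-cube} give $\Ddim(g(K)^{\mathbb Z},\sigma)\le d$. The edge case $d=0$ works the same way: $g$ maps to a point, the whole shift is zero-dimensional over the trivial system, and $\Ddim$ of a one-point system is $0$.

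\emph{Main obstacle.} The delicate step is justifying the existence of a map $K\to[0,1]^d$ with zero-dimensional fibers. I will cite the Hurewicz theorem: for compact metrizable $K$ with $\dim K\le d$, the maps $K\to[0,1]^d$ whose fibers all have dimension at most $0$ form a dense $G_\delta$ in $C(K,[0,1]^d)$. One must also check that a countable product of zero-dimensional compact metrizable spaces is zero-dimensional. This holds because clopen cylinder sets form a base, and for compact metrizable spaces a clopen base is equivalent to covering dimension $0$.
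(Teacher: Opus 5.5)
Your proof is correct and follows the paper's argument step for step. You use the fixed-point lower bound, then a map $K\to[0,1]^d$ with zero-dimensional fibers, the coordinatewise factor map onto $g(K)^{\mathbb Z}$ with zero-dimensional product fibers, \cref{lem:Ddim-zero-extension}, and finally monotonicity together with the cube formula. The only difference is where the zero-dimensional map comes from: the paper applies the Pasynkov--Toru\'nczyk mapping theorem to the constant map $K\to\{\mathrm{pt}\}$, while you cite Hurewicz's generic dimension-lowering theorem, which gives the same kind of map. What you should cite there is that generic-map theorem, not the ``formula'' $\dim X\le\dim Y+\dim f$.
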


\begin{proof}
The fixed-point set of the shift consists of the constant configurations and
is therefore homeomorphic to $K$.  The fixed-point lower bound
\eqref{eq:Ddim-fixed-point} gives
\[
\dim K
=
\dim\operatorname{Fix}(\sigma)
\le
\Ddim(K^{\mathbb Z},\sigma).
\]
If $\dim K=\infty$, this already proves the theorem.

Suppose now that $d:=\dim K<\infty$.  It remains to prove the upper bound.  If
$d\ge1$, apply the Pasynkov--Toru\'nczyk
mapping theorem to the constant map $f:K\to\{\mathrm{pt}\}$.  Here
$\dim f$ denotes the supremum of the covering dimensions of the fibers, so
$\dim f=\dim K=d$.  Thus
\cite[Theorem~1.7, $(1)\Rightarrow(3')$]{LevinLewis} gives a continuous map
$q:K\to[0,1]^d$ such that
$(f,q):K\to\{\mathrm{pt}\}\times[0,1]^d$ is zero-dimensional.  Its fibers
are precisely the fibers of $q$, so all fibers of $q$ are zero-dimensional.
If $d=0$, let $q$ be the map from $K$ to the one-point space $[0,1]^0$; its
only fiber is $K$, which is zero-dimensional.  In either case, put $L=q(K)$
and regard $q$ as a surjection onto $L$.  The coordinatewise map
\[
Q=q^{\mathbb Z}:K^{\mathbb Z}\longrightarrow L^{\mathbb Z}
\]
is a factor map.  For every
$y=(y_n)_{n\in\mathbb Z}\in L^{\mathbb Z}$, its fiber is
\[
Q^{-1}(y)
=\prod_{n\in\mathbb Z}q^{-1}(y_n).
\]
Each factor $q^{-1}(y_n)$ is a zero-dimensional compact metrizable space.
Basic open sets in the product depend on only finitely many coordinates, and
in each of those coordinates they can be refined by clopen sets.  The
resulting finite-coordinate cylinders are clopen and form a basis, so
$Q^{-1}(y)$ is zero-dimensional.  Therefore
\cref{lem:Ddim-zero-extension} gives
\[
\Ddim(K^{\mathbb Z},\sigma)
\le
\Ddim(L^{\mathbb Z},\sigma).
\]
Since $L\subseteq[0,1]^d$, the coordinatewise inclusion is an equivariant
topological embedding
\[
L^{\mathbb Z}\hookrightarrow([0,1]^d)^{\mathbb Z}.
\]
Monotonicity under equivariant embeddings and \eqref{eq:Ddim-cube} now give
\[
\Ddim(L^{\mathbb Z},\sigma)
\le
\Ddim(([0,1]^d)^{\mathbb Z},\sigma)
=d.
\]
Combining the upper and lower bounds completes the proof.
\end{proof}

\begin{remark}
For finite-dimensional alphabets, \cref{thm:Ddim-full-shift} contrasts with
Tsukamoto's formula for mean dimension \cite[Theorem~1.1]{Tsukamoto2019}:
\[
\mdim(K^{\mathbb Z},\sigma)
=
\begin{cases}
\dim K, & \text{if $K$ is of basic type},\\
\dim K-1, & \text{if $K$ is of exceptional type}.
\end{cases}
\]
Thus dynamical dimension and mean dimension agree for full shifts over
alphabets of basic type, whereas they differ by one for alphabets of
exceptional type.  Tsukamoto's theorem assumes that $K$ is
finite-dimensional.  For a general infinite-dimensional compact metrizable
alphabet $K$, the value of $\mdim(K^{\mathbb Z},\sigma)$ remains an open
problem.  In contrast, \cref{thm:Ddim-full-shift} shows that
$\Ddim(K^{\mathbb Z},\sigma)=\infty$ whenever $\dim K=\infty$.
\end{remark}

\begin{remark}
Meyerovitch's definition and cubical full-shift formula apply to every
countable acting group.  The same proof therefore shows that, for any
countable group $G$,
\[
\Ddim(K^G,\mathrm{shift})=\dim K.
\]
\end{remark}

We now combine the full-shift formula with the inverse-limit estimate and the
mean-dimension lower bound obtained in \cref{prop:inverse-mdim}.

\begin{proposition}\label{prop:Ddim-bound}
The dynamical system satisfies
\[
\Ddim(\calX,T)=N.
\]
Consequently, there is an equivariant almost embedding
\begin{equation}\label{eq:almost-A}
(\calX,T)
\longrightarrow
\left(([0,1]^{2N+1})^{\mathbb Z},\sigma\right).
\end{equation}
\end{proposition}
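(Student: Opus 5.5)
The plan is to squeeze $\Ddim(\calX,T)$ between two bounds equal to $N$, then invoke the almost-embedding criterion. For the lower bound, Meyerovitch's inequality $\mdim\le\Ddim$ (item (iv) of \cref{sec:Ddim-inverse}) combined with \cref{prop:inverse-mdim} gives
\[
N=\mdim(\calX,T)\le\Ddim(\calX,T).
\]

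For the upper bound, I will work stage by stage. Each $X_m=X(N,q(m),\delta)$ is a closed shift-invariant subset of $(\Sone^N)^{\mathbb Z}$, so the inclusion is an equivariant topological embedding. Monotonicity of $\Ddim$ under such embeddings (item (i)), together with \cref{thm:Ddim-full-shift} applied to $K=\Sone^N$, then gives
\[
\Ddim(X_m,T_m)\le\Ddim\bigl((\Sone^N)^{\mathbb Z},\sigma\bigr)=\dim\Sone^N=N.
\]
Here $\dim\Sone^N=N$ is the classical covering dimension of the $N$-torus. The bonding maps $\theta_{m,m-1}$ are surjective, since they admit the right inverses $\eta_{m-1,m}$. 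Hence \cref{lem:Ddim-inverse} applies and yields
\[
\Ddim(\calX,T)\le\sup_m\Ddim(X_m,T_m)\le N.
\]
Combined with the lower bound, this proves $\Ddim(\calX,T)=N$.

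For the almost embedding, take $d=2N+1$. Then $\Ddim(\calX,T)=N<(2N+1)/2$, so item (v) of \cref{sec:Ddim-inverse} produces an equivariant almost embedding of $(\calX,T)$ into $(([0,1]^{2N+1})^{\mathbb Z},\sigma)$, which is \eqref{eq:almost-A}.

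No step here is a real obstacle, since all the ingredients were established earlier. The only points that need care are checking the hypotheses: that $X_m$ sits inside the full shift as a closed invariant subsystem, so monotonicity applies; that the bonding maps are surjective, as \cref{lem:Ddim-inverse} requires; and that the strict inequality $N<N+\tfrac12$ holds, as item (v) requires.
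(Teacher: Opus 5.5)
Your proposal is correct and follows the paper's proof essentially step for step. The upper bound comes from \cref{thm:Ddim-full-shift}, monotonicity and \cref{lem:Ddim-inverse}; the lower bound comes from \cref{prop:inverse-mdim} and Meyerovitch's inequality $\mdim\le\Ddim$; and the almost embedding comes from \cite[Theorem~10.1]{Meyerovitch2026} with $N<(2N+1)/2$. You also explicitly check that the bonding maps are surjective, which \cref{lem:Ddim-inverse} requires.
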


\begin{proof}
Since $\dim(\Sone^N)=N$, \cref{thm:Ddim-full-shift} and monotonicity under
equivariant embeddings give
\[
\Ddim(X_m,T_m)\le
\Ddim((\Sone^N)^{\mathbb Z},\sigma)=N.
\]
By construction,
\[
(\calX,T)=\varprojlim(X_m,T_m).
\]
Thus, by \cref{lem:Ddim-inverse},
\[
\Ddim(\calX,T)
\le
\sup_m\Ddim(X_m,T_m)
\le N.
\]
On the other hand, \cref{prop:inverse-mdim} and
\cite[Theorem~9.1]{Meyerovitch2026} give
\[
N=\mdim(\calX,T)\le\Ddim(\calX,T).
\]
Hence
$\Ddim(\calX,T)=N$.

Since
\[
\Ddim(\calX,T)=N<\frac{2N+1}{2},
\]
\cite[Theorem~10.1]{Meyerovitch2026} gives
\eqref{eq:almost-A}.
\end{proof}

\section{The first-coordinate projection is a distal extension}\label{sec:relative-distal}

We next prove the special structural property that upgrades the almost embedding to an actual embedding.

\begin{definition}
A factor map $p:(Y,S)\to(Z,R)$ is called a
\emph{distal extension} if, whenever $y_1\ne y_2$ and
$p(y_1)=p(y_2)$, the orbit closure of $(y_1,y_2)$ under $S\times S$ is
disjoint from the diagonal $\Delta_Y$.
\end{definition}

Thus a distal extension is the relative analogue of a distal system.  Indeed,
$(Y,S)$ is distal if and only if the unique factor map from $(Y,S)$ to the
one-point system is a distal extension.  More generally, every factor map
from a distal system is a distal extension; conversely, if
$p:(Y,S)\to(Z,R)$ is a distal extension and $(Z,R)$ is distal, then $(Y,S)$
is distal.

We now return to the system $(\calX,T)$ constructed in
\cref{sec:construction}.  To verify that its first-coordinate projection
$\pi_1:\calX\to X_1$ is a distal extension, we first derive an explicit
formula for the composite bonding map from $X_m$ to $X_1$.

Recall that, although the bonding maps
$\theta_{j,j-1}:X_j\to X_{j-1}$ were originally defined between the finite
stages, the formula in \eqref{eq:bonding} defines group homomorphisms on the
ambient full shift $(\Sone^N)^{\mathbb Z}$.  We use the same notation
$\theta_{j,k}$ for the corresponding composites on the full shift.

\begin{lemma}\label{lem:theta-telescope}
For every $m\ge1$ and every $z\in(\Sone^N)^{\mathbb Z}$,
\begin{equation}\label{eq:theta-sum}
(\theta_{m,1}z)_k
=
\sum_{i=0}^{q(m)-1}z_{k+i}
\qquad(k\in\mathbb Z).
\end{equation}
\end{lemma}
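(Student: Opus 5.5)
We argue by induction on $m$, using the recursive structure $\theta_{m,1}=\theta_{m-1,1}\circ\theta_{m,m-1}$ for $m\ge2$; this identity holds on the ambient full shift by the definition of the composites. For the base case $m=1$, we have $q(1)=1$ and $\theta_{1,1}=\id$, so the right-hand side of \eqref{eq:theta-sum} reduces to $z_k$ and there is nothing to prove.

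For the inductive step, assume \eqref{eq:theta-sum} holds for $m-1$. This applies to every element of the full shift, in particular to $w=\theta_{m,m-1}z$. Then
\[
(\theta_{m,1}z)_k=(\theta_{m-1,1}w)_k=\sum_{i=0}^{q(m-1)-1}w_{k+i}
=\sum_{i=0}^{q(m-1)-1}\sum_{l=0}^{m-1}z_{k+i+l\,q(m-1)},
\]
where the last step substitutes \eqref{eq:bonding}.

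The only real point is to reindex this double sum. The map $(i,l)\mapsto i+l\,q(m-1)$, with $0\le i<q(m-1)$ and $0\le l<m$, is a bijection onto $\{0,1,\ldots,m\,q(m-1)-1\}$ by division with remainder. Since $m\,q(m-1)=q(m)$, and since addition in $\Sone^N$ is commutative and associative, the double sum equals $\sum_{j=0}^{q(m)-1}z_{k+j}$. This completes the induction.

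No step is a genuine obstacle. The one thing to check is that the induction hypothesis is applied to the ambient-shift extensions of the bonding maps rather than only on $X_{m-1}$. This is needed because $\theta_{m,m-1}z$ need not lie in a finite stage when $z$ is an arbitrary point of $(\Sone^N)^{\mathbb Z}$. Since the statement is asserted for all $z$ in the full shift, the induction runs entirely on the full shift and this issue does not arise.
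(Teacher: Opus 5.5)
Your proof is correct and is essentially the paper's own argument: induction on $m$ via $\theta_{m,1}=\theta_{m-1,1}\circ\theta_{m,m-1}$, substitution of \eqref{eq:bonding}, and reindexing the double sum by the unique representation $i=a+b\,q(m-1)$ with $q(m)=m\,q(m-1)$. Your remark that the induction runs on the ambient full shift also matches the paper, which uses the group-homomorphism extensions of the bonding maps throughout.
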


\begin{proof}
For $m=1$, the assertion follows from $q(1)=1$ and
$\theta_{1,1}=\id$.  We proceed by induction on $m$.  Suppose that the
formula holds for $m-1$, where $m\ge2$.  Since
\[
\theta_{m,1}
=
\theta_{m-1,1}\circ\theta_{m,m-1},
\]
the induction hypothesis and \eqref{eq:bonding} give, for every
$z\in(\Sone^N)^{\mathbb Z}$ and $k\in\mathbb Z$,
\begin{align*}
(\theta_{m,1}z)_k
&=
\sum_{a=0}^{q(m-1)-1}(\theta_{m,m-1}z)_{k+a}\\
&=
\sum_{a=0}^{q(m-1)-1}\sum_{b=0}^{m-1}
z_{k+a+bq(m-1)}.
\end{align*}
Every integer $i$ with $0\le i<q(m)=m q(m-1)$ has a unique
representation
\[
i=a+bq(m-1),
\qquad
0\le a<q(m-1),\quad 0\le b<m.
\]
Consequently, the last double sum is
\[
\sum_{i=0}^{q(m)-1}z_{k+i},
\]
which proves \eqref{eq:theta-sum} and completes the induction.
\end{proof}

The telescoping formula turns equality in the first coordinate into periodicity of
the difference at every higher stage.  This gives the separation from the
diagonal required for distality.

\begin{proposition}\label{prop:relative-distal}
The projection
\[
\pi_1:(\calX,T)\longrightarrow(X_1,T_1)
\]
is a distal extension.
\end{proposition}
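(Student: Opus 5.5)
Let $\mathbf x=(x^{(m)})_m$ and $\mathbf y=(y^{(m)})_m$ be distinct points of $\calX$ with $x^{(1)}=y^{(1)}$. The plan is to find one stage $m$ at which the difference $z=x^{(m)}-y^{(m)}$ is a nonzero $q(m)$-periodic sequence. Periodicity is preserved along the orbit, so this will give a uniform positive lower bound on the distance from $T^n\mathbf x$ to $T^n\mathbf y$, which is exactly what keeps the orbit closure of the pair off the diagonal.

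\textbf{Step 1: periodicity of the difference.} Since the $\theta_{m,1}$ are group homomorphisms on the ambient full shift, $z=x^{(m)}-y^{(m)}$ (difference taken in $(\Sone^N)^{\mathbb Z}$) satisfies $\theta_{m,1}z=0$. By \cref{lem:theta-telescope} this means
\[
\sum_{i=0}^{q(m)-1}z_{k+i}=0 \qquad \text{for every } k\in\mathbb Z.
\]
Subtracting the identity at $k$ from the identity at $k+1$ gives $z_{k+q(m)}=z_k$. So $z$ is $q(m)$-periodic, and this holds at every stage $m$. No property of $X_m$ is needed here; only the homomorphism structure is used.

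\textbf{Step 2: choosing a stage and getting a uniform bound.} Because $\mathbf x\ne\mathbf y$, there is some $m$ with $x^{(m)}\ne y^{(m)}$. Then $z$ is nonzero and $q(m)$-periodic, so
\[
c:=\max_{0\le k<q(m)}\rho_N(z_k,0)>0,
\]
and $c$ is the maximum of $\rho_N(z_k,0)$ over all $k\in\mathbb Z$. Translation invariance of $\rho_N$ gives $\rho_N(x^{(m)}_k,y^{(m)}_k)=\rho_N(z_k,0)$. Applying $T^n$ shifts $z$ by $n$, so for every $n$ there is a $k\in\{0,\dots,q(m)-1\}$ with $\rho_N\bigl((T^nx)^{(m)}_k,(T^ny)^{(m)}_k\bigr)=c$. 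Thus for every $n$, the projections $\pi_m(T^n\mathbf x)$ and $\pi_m(T^n\mathbf y)$ differ by at least $c$ in some coordinate of the window $[0,q(m))$.

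\textbf{Step 3: passing to the orbit closure.} The condition ``some coordinate $k\in[0,q(m))$ at level $m$ has distance $\ge c$'' defines a closed subset of $\calX\times\calX$. This set contains the whole $T\times T$ orbit of $(\mathbf x,\mathbf y)$, hence contains its closure, and it is disjoint from $\Delta_{\calX}$. Therefore $\pi_1$ is a distal extension. Surjectivity of $\pi_1$ comes from the surjective bonding maps, so $\pi_1$ is a factor map.

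I do not expect a serious obstacle. The one point needing care is that distinctness of $\mathbf x$ and $\mathbf y$ is detected at some finite level $m$, and that the periodicity relation uses the ambient group structure and not membership in $X_m$. Both follow from the inverse-limit definition and from \cref{lem:theta-telescope}.
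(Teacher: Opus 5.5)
Your proposal is correct and follows essentially the paper's proof: you derive $q(m)$-periodicity of $z=x^{(m)}-y^{(m)}$ from $\theta_{m,1}z=0$ and the telescoping lemma, then trap the orbit closure in a closed set that misses $\Delta_{\calX}$. The only difference is cosmetic: the paper notes that $\Phi_m(\mathbf u,\mathbf v)=u^{(m)}-v^{(m)}$ maps the orbit closure into the finite shift-orbit $F$ of $z$, which omits $0$, whereas you use an explicit separation threshold $c>0$ on a window of length $q(m)$.
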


\begin{proof}
Write points of the inverse limit as
\[
\mathbf x=(x^{(1)},x^{(2)},\ldots),
\qquad
\mathbf y=(y^{(1)},y^{(2)},\ldots).
\]
Suppose that $\mathbf x\ne\mathbf y$ but
\[
\pi_1(\mathbf x)=\pi_1(\mathbf y).
\]
Choose $m\ge2$ such that $x^{(m)}\ne y^{(m)}$, and define
\[
z=x^{(m)}-y^{(m)}\in(\Sone^N)^{\mathbb Z}.
\]
By the inverse-limit compatibility relations and the equality
$\pi_1(\mathbf x)=\pi_1(\mathbf y)$,
\[
\theta_{m,1}(x^{(m)})
=x^{(1)}
=y^{(1)}
=\theta_{m,1}(y^{(m)}).
\]
Using the group-homomorphism extension of $\theta_{m,1}$ recalled above, we
obtain
\[
\theta_{m,1}(z)
=
\theta_{m,1}(x^{(m)})-\theta_{m,1}(y^{(m)})
=0.
\]
By \cref{lem:theta-telescope}, for every $k\in\mathbb Z$,
\begin{equation}\label{eq:zero-block-sum}
\sum_{i=0}^{q(m)-1}z_{k+i}=0.
\end{equation}
Applying \eqref{eq:zero-block-sum} at $k$ and $k+1$, we obtain
\[
0
=
\sum_{i=0}^{q(m)-1}z_{k+1+i}
-
\sum_{i=0}^{q(m)-1}z_{k+i}
=
z_{k+q(m)}-z_k.
\]
Therefore
\begin{equation}\label{eq:z-periodic}
z_{k+q(m)}=z_k
\qquad(k\in\mathbb Z).
\end{equation}
Since $x^{(m)}\ne y^{(m)}$, the sequence $z$ is nonzero.  The preceding
identity \eqref{eq:z-periodic} shows that $q(m)$ is a period of $z$ (and
hence that the least period of $z$ divides $q(m)$).  Therefore the shift
orbit of $z$ is the finite set
\[
F=\{\sigma^r z:0\le r<q(m)\}.
\]
Because $\sigma$ is invertible and $z\ne0$, no element of $F$ is the zero
sequence.

Let $K$ be the orbit closure of $(\mathbf x,\mathbf y)$ under $T\times T$.
Since $\calX\times\calX$ is compact, so is $K$.  Consider the continuous map
\[
\Phi_m:\calX\times\calX\longrightarrow(\Sone^N)^{\mathbb Z},
\qquad
\Phi_m(\mathbf u,\mathbf v)=u^{(m)}-v^{(m)}.
\]
For every $n\in\mathbb Z$, equivariance of $\pi_m$ gives
\[
\pi_m(T^n\mathbf x)=\sigma^n x^{(m)}
\quad\text{and}\quad
\pi_m(T^n\mathbf y)=\sigma^n y^{(m)}.
\]
Since the shift $\sigma$ is a group automorphism of
$(\Sone^N)^{\mathbb Z}$, it follows that
\[
\begin{aligned}
\Phi_m(T^n\mathbf x,T^n\mathbf y)
&=
\sigma^n x^{(m)}-\sigma^n y^{(m)}\\
&=
\sigma^n(x^{(m)}-y^{(m)})\\
&=
\sigma^n z
\in F.
\end{aligned}
\]
Since $F$ is closed, continuity of $\Phi_m$ implies
$\Phi_m(K)\subseteq F$.  On the other hand,
$\Phi_m(\mathbf w,\mathbf w)=0$ for every $\mathbf w\in\calX$, whereas
$0\notin F$.  Thus $K\cap\Delta_{\calX}=\varnothing$, proving that $\pi_1$
is a distal extension.
\end{proof}

\section{Upgrading the almost embedding}\label{sec:upgrade}

We now combine the almost embedding from \cref{prop:Ddim-bound} with the
distal factor found in \cref{prop:relative-distal}.  The following general
lemma is a relative version of the fact that an almost embedding of a distal
system is an embedding \cite[Proposition~10.2]{Meyerovitch2026}.  It isolates
the mechanism that makes the combined map injective.

\begin{lemma}\label{lem:upgrade}
Let $a:(Y,S)\to(W,R)$ be an equivariant almost embedding, and let
$p:(Y,S)\to(Z,Q)$ be a distal extension.  Then
\[
(a,p):Y\longrightarrow W\times Z
\]
is injective, and hence is a topological embedding when $Y$ is compact and $W\times Z$ is Hausdorff.
\end{lemma}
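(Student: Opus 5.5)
We argue by contradiction, using the fiber-product characterization of almost embeddings \cite[Proposition~10.1]{Meyerovitch2026}: $a$ is an almost embedding if and only if every $(S\times S)$-invariant Borel probability measure on $Y\times_aY$ is supported on $\Delta_Y$. Suppose $y_1\neq y_2$ satisfy $a(y_1)=a(y_2)$ and $p(y_1)=p(y_2)$. The aim is to produce an invariant measure on $Y\times_aY$ that gives zero mass to the diagonal.

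\emph{Step 1.} Let $K$ be the $(S\times S)$-orbit closure of $(y_1,y_2)$ in $Y\times Y$. The set $Y\times_aY$ is closed, since $a$ is continuous and $W$ is Hausdorff. It is also $(S\times S)$-invariant, because $a\circ S=R\circ a$ and $R$ is a bijection. Hence $K\subseteq Y\times_aY$. Since $p$ is a distal extension, $p(y_1)=p(y_2)$, and $y_1\ne y_2$, we have $K\cap\Delta_Y=\varnothing$.

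\emph{Step 2.} Since $K$ is a nonempty compact $(S\times S)$-invariant set, Krylov--Bogolyubov gives an invariant Borel probability measure $\lambda$ on $K$. For instance, take a weak-$*$ limit point of the averages $\frac1n\sum_{j=0}^{n-1}\delta_{(S\times S)^j(y_1,y_2)}$. Regarded as a measure on $Y\times_aY$, $\lambda$ is $(S\times S)$-invariant and $\lambda(\Delta_Y)\le\lambda\bigl((Y\times Y)\setminus K\bigr)=0$. This contradicts the characterization above. Therefore $(a,p)$ is injective.

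\emph{Step 3.} The map $(a,p)$ is continuous and injective, $Y$ is compact, and $W\times Z$ is Hausdorff. A continuous injection from a compact space into a Hausdorff space is a homeomorphism onto its image, so $(a,p)$ is a topological embedding. It is equivariant for $R\times Q$ because $a$ and $p$ are.

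The argument is short. The point that needs care is Step 1: we must check that the orbit closure stays inside the fiber product, so that the measure obtained in Step 2 qualifies in Meyerovitch's criterion. The definition of distal extension then gives disjointness from the diagonal directly. If one prefers to avoid the fiber-product characterization, an alternative is to pick an ergodic $\lambda$ on $K$ and push it forward to $Y$ under each coordinate projection. The resulting two measures both project to $\mu=(\mathrm{pr}_1)_*\lambda$, and one would then argue that $a$ cannot be injective $\mu$-a.e. on the relevant joining. The measure-supported-on-diagonal formulation is cleaner, so we use it.
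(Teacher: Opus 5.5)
Your proof is correct and follows essentially the same route as the paper: you take the $(S\times S)$-orbit closure $K$ of $(y_1,y_2)$, observe that $K\subseteq Y\times_aY$ and $K\cap\Delta_Y=\varnothing$, build an invariant measure on $K$ by Krylov--Bogolyubov averaging, and contradict Meyerovitch's fiber-product characterization of almost embeddings. Your explicit checks that $Y\times_aY$ is closed and invariant, and your compact-to-Hausdorff argument in Step~3, spell out details the paper leaves implicit.
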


\begin{proof}
Suppose
\[
a(y_1)=a(y_2),
\qquad
p(y_1)=p(y_2).
\]
Assume for contradiction that $y_1\ne y_2$.  Let $K$ be the orbit closure of
$(y_1,y_2)$ under $S\times S$.  Equivariance of $a$ and the equality
$a(y_1)=a(y_2)$ imply
\[
K\subseteq Y\times_aY.
\]
Distality of the extension $p$ and the equality $p(y_1)=p(y_2)$ imply
\[
K\cap\Delta_Y=\varnothing.
\]
$K$ is a nonempty compact invariant set for $S\times S$.  Choose
$\xi\in K$ and, for $n\ge1$, define the empirical probability measure
\[
\mu_n=\frac1n\sum_{j=0}^{n-1}\delta_{(S\times S)^j\xi}
\]
on $K$.  The space of Borel probability measures on the compact metrizable
space $K$ is weak-$*$ compact, so some subsequence $\mu_{n_\ell}$ converges
weak-$*$ to a Borel probability measure $\mu$ on $K$.  As a weak-$*$
limit of empirical measures, $\mu$ is $(S\times S)$-invariant.
Regarding $\mu$ as a probability measure on $Y\times Y$, we have
$\mu(K)=1$.  Because
$K\cap\Delta_Y=\varnothing$, this measure, viewed as a measure on the fiber
product $Y\times_aY$, is not supported on the diagonal.  This contradicts the
fiber-product characterization of an almost embedding
\cite[Proposition~10.1]{Meyerovitch2026}.  Therefore $y_1=y_2$.
\end{proof}

To apply \cref{lem:upgrade}, it remains to encode the first-coordinate factor
in a finite-dimensional cubical shift.  For this we use the following
elementary embedding of its alphabet.

\begin{lemma}\label{lem:torus-embedding}
For every $N\ge1$, the torus $\Sone^N$ admits a smooth embedding into
$[0,1]^{N+1}$.
\end{lemma}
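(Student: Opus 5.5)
The plan is to build the embedding inductively on $N$, starting from the standard embedding of the circle into the plane and adding one dimension at each step. Concretely, I will prove the stronger statement that $\Sone^N$ smoothly embeds into $\mathbb R^{N+1}$, and then compose with an affine rescaling; since $\Sone^N$ is compact, its image is bounded, so a homothety followed by a translation places it inside $[0,1]^{N+1}$ and preserves smoothness and the embedding property.

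For $N=1$, the map $t\mapsto(\cos\pi t,\sin\pi t)$ is well defined on $\Sone=\mathbb R/2\mathbb Z$. It is a smooth injective immersion of a compact manifold, hence a smooth embedding into $\mathbb R^2$.

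For the induction step, I will use the standard "tube" construction. Assume $e_{N-1}:\Sone^{N-1}\to\mathbb R^{N}$ is a smooth embedding whose image avoids a closed half-space, say lies in $\{u_N>1\}$; this can always be arranged by a translation. I then define
\[
e_N(s,t)=\bigl(\cos(\pi t)\,e_{N-1}(s),\ \sin(\pi t)\,e_{N-1}(s)_N\bigr)\in\mathbb R^{N+1}.
\]
Equivalently, I rotate the $N$-th coordinate of $e_{N-1}(s)$ through angle $\pi t$ into the new $(N+1)$-st direction, keeping the first $N-1$ coordinates fixed and writing $\cos(\pi t)$ only on the $N$-th slot. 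The formula should therefore be read as $(u_1,\dots,u_{N-1},u_N\cos\pi t,u_N\sin\pi t)$ with $u=e_{N-1}(s)$; this is a surface of revolution about the codimension-two axis $\{u_N=u_{N+1}=0\}$.

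The main obstacle is checking injectivity and the immersion property of this map. Since $u_N>1>0$, the pair $(u_N\cos\pi t,\,u_N\sin\pi t)$ determines both $u_N$ and $t\in\mathbb R/2\mathbb Z$ uniquely via polar coordinates. Together with $u_1,\dots,u_{N-1}$, this recovers $u=e_{N-1}(s)$, and hence $s$ by injectivity of $e_{N-1}$. For the immersion property, the differential in the $t$ direction is $\pi u_N(0,\dots,0,-\sin\pi t,\cos\pi t)$. This vector is nonzero and is orthogonal to the rotated copy of the image of $de_{N-1}$, so the differential has full rank $N$. A smooth injective immersion of the compact manifold $\Sone^N$ is then a smooth embedding, which completes the induction.
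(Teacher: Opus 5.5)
Your proof is correct, and it follows a related but different route from the paper's. Both arguments induct on $N$, passing from $\Sone^{N-1}\subset\mathbb R^N$ to $\Sone^N\subset\mathbb R^{N+1}$, but the mechanism differs.

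\emph{The paper's route.} It regards $\Sone^{N-1}$ as a codimension-two submanifold of $\mathbb R^{N+1}$ and notes that its normal bundle is trivial. That is why it carries orientability of the hypersurface through the induction. It then takes the boundary of a small tubular neighborhood and identifies it with $\Sone^{N-1}\times\mathbb S^1$.

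\emph{Your route.} You spin $\Sone^{N-1}$, placed in the open half-space $\{u_N>1\}$, about the codimension-two axis $\{u_N=u_{N+1}=0\}$. You then check both properties by hand:
\begin{enumerate}[label=(\alph*)]
\item Injectivity follows from polar coordinates in the last two slots, using $u_N>0$.
\item Full rank follows because the $s$-derivatives are the image of $de_{N-1}$ under the isometric injection $u\mapsto(u_1,\dots,u_{N-1},u_N\cos\pi t,u_N\sin\pi t)$. The $t$-derivative is a nonzero vector orthogonal to that image.
\end{enumerate}

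\emph{What each buys.} Your version is a fully explicit formula. It needs no tubular neighborhood theorem, no identification of the tube's boundary with the unit normal circle bundle, and no orientability. In fact the same spinning argument embeds $M\times\mathbb S^1$ in $\mathbb R^{N+1}$ for any compact submanifold $M\subset\mathbb R^N$. The paper's version is shorter to state and coordinate-free, at the cost of those standard differential-topology inputs. For the paper's application only a topological embedding is needed, so either argument suffices.

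\emph{One cosmetic fix.} Your first display, read literally, multiplies all of $e_{N-1}(s)$ by $\cos(\pi t)$. At $t=1/2$ this sends every $s$ with the same value of $u_N$ to the same point, so injectivity fails. Replace that display by the formula $(u_1,\dots,u_{N-1},u_N\cos\pi t,u_N\sin\pi t)$ that you actually use.
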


\begin{proof}
It is enough to construct an embedding in $\mathbb R^{N+1}$ and then compose
it with an affine rescaling into the interior of $[0,1]^{N+1}$.  We argue
inductively.  The case $N=1$ is the usual circle in $\mathbb R^2$.  Suppose
that $\Sone^{N-1}$ is embedded as an orientable smooth hypersurface in
$\mathbb R^N$.  Regard it as a codimension-two submanifold of
$\mathbb R^{N+1}=\mathbb R^N\times\mathbb R$.  Its normal
bundle is the direct sum of the trivial normal line bundle in $\mathbb R^N$
and the last coordinate line, and hence is a trivial rank-two bundle.  The
boundary of a sufficiently small tubular neighborhood is therefore embedded
in $\mathbb R^{N+1}$ and is homeomorphic to
\[
\Sone^{N-1}\times\mathbb S^1\cong\Sone^N.
\]
This boundary is again an orientable hypersurface, completing the induction.
\end{proof}

All ingredients for the final embedding are now in place.

\begin{proof}[Completion of the proof of \cref{thm:main}]
Properties~(i) and~(ii) were established in
\cref{sec:construction}, while property~(iii) follows from
\cref{prop:inverse-mdim,prop:Ddim-bound}.  It remains to prove the embedding
assertion in property~(iv).

By \cref{prop:Ddim-bound}, we may choose an equivariant almost embedding
\[
A:(\calX,T)\longrightarrow
\left(([0,1]^{2N+1})^{\mathbb Z},\sigma\right).
\]
Let
\[
E_1:X_1\longrightarrow([0,1]^{N+1})^{\mathbb Z}
\]
be the coordinatewise embedding induced by \cref{lem:torus-embedding}.  More
explicitly, if $e:\Sone^N\hookrightarrow[0,1]^{N+1}$ is the chosen embedding,
then $(E_1x)_k=e(x_k)$.  By
\cref{prop:relative-distal}, $\pi_1$ is a distal extension.  Therefore
\cref{lem:upgrade} shows that $(A,\pi_1)$ is injective.  Since $E_1$ is
injective, it follows that
\[
\Phi=(A,E_1\circ\pi_1):
\calX
\longrightarrow
([0,1]^{2N+1})^{\mathbb Z}
\times
([0,1]^{N+1})^{\mathbb Z}
\]
is injective.  It is continuous and equivariant.  Concatenating the two
alphabet coordinates gives a canonical conjugacy
\[
([0,1]^{2N+1})^{\mathbb Z}
\times
([0,1]^{N+1})^{\mathbb Z}
\cong
([0,1]^{3N+2})^{\mathbb Z}.
\]
Since $\calX$ is compact, the resulting continuous injection into the cubical
shift is a topological embedding.
\end{proof}

\section{Open questions}\label{sec:open-questions}

The preceding argument leaves several natural questions.

\begin{enumerate}[label=(\arabic*)]
\item Can the alphabet dimension in \cref{thm:main} be reduced from $3N+2$ to
$2N+1$?  The almost embedding already takes values in the
$(2N+1)$-cubical shift; the additional $N+1$ coordinates are used only to
encode the distal extension $\pi_1$.  An affirmative
answer would therefore require either a direct upgrade of this almost
embedding or a way of incorporating the factor information without
increasing the alphabet dimension.

\item Does every $\mathbb Z$-dynamical system $(Y,S)$ with
$\Ddim(Y,S)<\infty$ admit an equivariant topological embedding into
$(([0,1]^D)^{\mathbb Z},\sigma)$ for some finite $D$?  Finite dynamical
dimension gives an almost embedding by \cite{Meyerovitch2026}, but in the
present example the passage to a topological embedding also uses the special
distal extension $\pi_1$.  It is unclear what could replace this
additional structure in general.

\item For which aperiodic systems without the marker property does one have
\[
\mdim(Y,S)=\Ddim(Y,S)?
\]
The system studied here shows that the equality can hold even in the absence
of the marker property.  At the opposite extreme, Meyerovitch observed that
the free isometric system of Dranishnikov and Levin
\cite{DranishnikovLevin2025,Meyerovitch2026} satisfies
\[
\mdim(Y,S)=0
\qquad\text{and}\qquad
\Ddim(Y,S)=\infty.
\]
It remains natural to seek general hypotheses ensuring equality without the
marker property and, in particular, to ask whether there is a free non-marker
$\mathbb Z$-system for which both invariants are finite but
\[
\mdim(Y,S)<\Ddim(Y,S)<\infty.
\]
\end{enumerate}

\bigskip
\noindent Shanghai Center for Mathematical Sciences, Fudan University,\\
200438 Shanghai, China\\
Email: \href{mailto:ruxishi@fudan.edu.cn}{ruxishi@fudan.edu.cn}

\end{document}